\definecolor{mylinkcolor}{RGB}{0,0,130}
\providecommand{\keywords}[1]{\textbf{\textit{Keywords ---}} #1}
\providecommand{\msc}[1]{\textbf{\textit{MSC 2010 ---}} #1}
\providecommand{\acknowledgements}[1]{\textbf{\textit{Acknowledgements ---}} #1}
\newtheorem{mydef}{Definition}
\newtheorem{mythm}{Theorem}
\crefname{secname}{Section}{Section}
\Crefname{secname}{Sec.}{Sec.}
\crefname{section}{Section}{Sections}
\crefname{subsection}{Subsection}{Subsections}
\pgfplotsset{compat=newest}
\definecolor{phColor}{RGB}{255, 131, 131}
\definecolor{inputColor}{rgb}{1.0, 0.75, 0.0}
\colorlet{weightColor}{phColor}
\definecolor{stateColor}{RGB}{0,104,163}
\definecolor{pcaColor}{RGB}{60, 116, 200}
\definecolor{intermediateLatentColor}{RGB}{120, 122, 230}
\definecolor{aeColor}{RGB}{185, 122, 249}
\definecolor{latentColor}{RGB}{252, 114, 255}
\definecolor{plotColor1}{rgb}{0.4, .4, .4}
\definecolor{plotColor2}{rgb}{1, 0.0, 1.0}
\definecolor{plotColor3}{rgb}{0.5, 0.0, 0.5}
\definecolor{plotColor4}{rgb}{0, 1.0, 1.0}
\tikzset{
        cell/.style={
                rectangle, 
                rounded corners=.25mm, 
                minimum height =0.7cm, 
                minimum width=0.7cm, 
                draw=gray,
                anchor=center,
        },
        trapez/.style={
                trapezium, 
                rounded corners=0.15mm,
                minimum height=12mm, 
                trapezium stretches=true,
                trapezium angle=80, 
                inner sep=0mm,
                draw=gray,
                shape border rotate=270, 
        },
        mylabel/.style={%
                align=center,
                text=darkgray,
        },      
        Arrow/.style={
                -stealth,
                rounded corners=.1cm,
                thick,
                draw=gray,
                shorten >= 3pt,
                shorten <= 3pt
        },
        neuron/.style={circle, draw, minimum size=3ex, thick},
        operator/.style={circle, minimum width=7mm, fill=lightgray!40!white, draw=lightgray},
        equation/.style={cell},
}
\newcommand\drawNodes[3]{
  \foreach \neurons/\color [count=\lyrIdx] in #2 {
    \StrCount{\neurons}{,}[\arrlength] 
    \foreach \n [count=\nIdx] in \neurons
      \node[neuron, fill=\color!40!white, draw=\color!80!black] (#1-\lyrIdx-\nIdx) at (1.75*\lyrIdx, -2*\nIdx) {\n};
  }
}
\newcommand\denselyConnectNodes[3]{

  \begin{pgfonlayer}{bg0}
  \foreach \n [count=\lyrIdx, remember=\lyrIdx as \previdx, remember=\n as \prevn] in #2 {
    \foreach \y in {1,...,\n} {
      \ifnum \lyrIdx > 1
        \foreach \x in {1,...,\prevn}
          \draw[-, draw=#3, 
          densely dashed,
          thick] (#1-\previdx-\x.center) -- (#1-\lyrIdx-\y.center);
      \fi
    }
  }
  \end{pgfonlayer}
}
\author{Johannes Rettberg\thanks{Institute of Engineering and Computational Mechanics, University of Stuttgart, Pfaffenwaldring 9, 70569 Stuttgart, Germany. (\url{johannes.rettberg,jonas.kneifl,joerg.fehr@itm.uni-stuttgart.de})} 
\and Jonas Kneifl\footnotemark[1] 
\and Julius Herb\thanks{Institute of Applied Mechanics – Data Analytics in Engineering, University of Stuttgart, Universitätsstr.
32, 70569 Stuttgart, Germany. (\url{julius.herb@mib.uni-stuttgart.de})}
\and Patrick Buchfink\thanks{Department of Applied Mathematics, University of Twente, P.O. Box~217, 7500~AE Enschede. (\url{p.buchfink@utwente.nl})} 
\and J\"org Fehr\footnotemark[1] 
\and Bernard Haasdonk\thanks{Institute of Applied Analysis and Numerical Simulation, University of Stuttgart, Pfaffenwaldring 57, 70569 Stuttgart, Germany. (\url{haasdonk@mathematik.uni-stuttgart.de})}}
\title{Data-driven identification of latent port-Hamiltonian systems}
\begin{document}
\maketitle
\newcommand{\todo}[1]{\textcolor{yellow}{\textbf{Todo:} #1}}
\newcommand{\johannes}[1]{\textcolor{magenta}{\textbf{JR:}~#1}}
\newcommand{\jonas}[1]{\textcolor{cyan}{\textbf{JK:}~#1}}
\newcommand{\patrick}[1]{\textcolor{orange}{\textbf{PB:}~#1}}
\newcommand{\julius}[1]{\textcolor{green}{\textbf{JH:}~#1}}
\newcommand{\bernard}[1]{\textcolor{purple}{\textbf{BH:}~#1}}
\newcommand{\joerg}[1]{\textcolor{teal}{\textbf{JF:}~#1}}

\newcommand{\phin}{pHIN} 
\newcommand{\aphin}{ApHIN} 

\newcommand{\Rdim}{\ensuremath\mathbb{R}}
\newcommand{\N}{\ensuremath\mathbb{N}}
\newcommand{\contFun}{\ensuremath\mathcal{C}}
\newcommand{\transpose}{\intercal}
\newcommand{\rT}[1]{#1^\transpose}
\newcommand{\rTb}[1]{\left(#1\right)^\transpose}
\newcommand{\rTsb}[1]{\left[#1\right]^\transpose}
\newcommand{\eye}{\bm{I}}
\newcommand{\orderOf}{\mathcal{O}}
\newcommand{\inv}[1]{#1^{-1}}
\newcommand{\diag}{\text{diag}}
\newcommand{\rmd}{\mathrm{d}}
\newcommand{\ddt}{\frac{\mathrm{d}}{\mathrm{d}t}}
\newcommand{\tddt}{\tfrac{\mathrm{d}}{\mathrm{d}t}}
\newcommand{\binare}{\ensuremath\{0,1\}}
\newcommand{\norm}[1]{\left\lVert#1\right\rVert}
\newcommand{\abs}[1]{\left|#1\right|}

\newcommand{\approximate}[1]{\widetilde{#1}}
\newcommand{\reconstruct}[1]{\breve{#1}}

\newcommand{\Flow}{\bm{F}}
\newcommand{\rhs}{\bm{f}}
\newcommand{\rhsState}{\rhs_{\stateSpace}}
\newcommand{\rhsLatentState}{\rhs_{\latentStateSpace}}
\newcommand{\rhsLatentStatePH}{\rhs_{\latentStateSpace,\mathrm{pH}}}

\newcommand{\states}{\bm{x}}
\newcommand{\statesTraj}[1][t;\params]{\states(#1)}
\newcommand{\statesDotTraj}[1][t;\params]{\dot{\states}(#1)}
\newcommand{\statesPoint}{\bm{\mathscr{{\states}}}}
\newcommand{\stateSpace}{\mathcal{X}}
\newcommand{\stateDim}{N}

\newcommand{\statesApprox}{\approximate{\states}}
\newcommand{\statesApproxTraj}[1][t;\params]{\statesApprox(#1)}
\newcommand{\statesApproxPoint}{\approximate{\statesPoint}}
\newcommand{\statesApproxDotTraj}[1][t;\params]{\dot{\statesApprox}(#1)}

\newcommand{\latentStateDim}{r}
\newcommand{\latentState}{\bm{z}}
\newcommand{\latentStatePoint}{\bm{\mathscr{{\latentState}}}}
\newcommand{\latentStateTraj}[1][t;\params]{\latentState(#1)}
\newcommand{\latentStateDotTraj}[1][t;\params]{\dot{\latentState}(#1)}
\newcommand{\latentStateSpace}{\mathcal{Z}}
\newcommand{\intermediateState}{\bm{{v}}}
\newcommand{\intermediateStateDim}{n_{\intermediateState}}

\newcommand{\param}{\mu}
\newcommand{\params}{\bm{\param}}
\newcommand{\paramSpace}{\mathcal{P}}
\newcommand{\paramDim}{n_{\param}}
\newcommand{\timee}{t}
\newcommand{\timeSpace}{\mathcal{T}}
\newcommand{\timeFinal}{\timee_{\mathrm{end}}}
\newcommand{\nTime}{\eta}
\newcommand{\inputU}{u}
\newcommand{\inputs}{\bm{\inputU}}
\newcommand{\inputSpace}{\mathcal{U}}
\newcommand{\residual}{\bm{r}}

\newcommand{\funInput}{\bm{g}}
\newcommand{\funOutput}{\bm{h}}

\newcommand{\A}{\bm{A}}
\newcommand{\M}{\bm{M}}
\newcommand{\ltiAid}{\approximate{\A}}
\newcommand{\ltiBid}{\phBid}
\newcommand{\phJ}{\bm{J}}
\newcommand{\phR}{\bm{R}}
\newcommand{\phQ}{\bm{Q}}
\newcommand{\phDescriptor}{\bm{E}}
\newcommand{\phB}{\bm{B}}
\newcommand{\Hamiltonian}{\mathcal{H}}
\newcommand{\phState}{\approximate{\latentState}}
\newcommand{\phStateTraj}[1][t;\params]{\approximate{\latentState}(#1)}
\newcommand{\phStatePoint}{\approximate{\bm{\mathscr{\latentState}}}}
\newcommand{\phStateDot}{\dot{\phState}}
\newcommand{\phStateDotTraj}[1][t;\params]{\dot{\phState}(#1)}
\newcommand{\phStateDotPoint}{\dot{\phStatePoint}}
\newcommand{\phSpace}{\latentStateSpace}
\newcommand{\phDim}{\latentStateDim}
\newcommand{\phPortDim}{n_{\mathrm{p}}}
\newcommand{\phInputU}{\inputs}
\newcommand{\phInputUTraj}[1][t]{\inputs(#1)}
\newcommand{\phInputUPoint}{\bm{\mathscr{{\inputs}}}}
\newcommand{\phInputSpace}{\inputSpace}
\newcommand{\phOutput}{\bm{y}}
\newcommand{\phOutputTraj}[1][t;\params]{\phOutput(#1)}
\newcommand{\phOutputPoint}{\bm{\mathscr{{\phOutput}}}}
\newcommand{\phOutputSpace}{\mathcal{Y}}

\newcommand{\phJid}{\approximate{\phJ}}
\newcommand{\phRid}{\approximate{\phR}}
\newcommand{\phQid}{\approximate{\phQ}}
\newcommand{\phBid}{\approximate{\phB}}

\newcommand{\nSamples}{n_t}
\newcommand{\nSamplesTest}{\nSamples^{\text{test}}}
\newcommand{\solutionManifold}{\mathcal{S}}

\newcommand{\disp}{q}
\newcommand{\disps}{\bm{\disp}}
\newcommand{\dispSpace}{\mathcal{Q}}
\newcommand{\temp}{\vartheta}
\newcommand{\temps}{\bm{\temp}}
\newcommand{\momentum}{p}
\newcommand{\momentums}{\bm{\momentum}}
\newcommand{\nNodes}{n_{\mathrm{nodes}}}
\newcommand{\stiffness}{k}
\newcommand{\mass}{m}
\newcommand{\damping}{c}
 \newcommand{\heatflux}{Q}
 \newcommand{\heatcond}{\lambda_{\mu}}
 \newcommand{\density}{\rho_{\mu}}

\newcommand{\eigenvalue}{\lambda}
\newcommand{\eigenvalueMatrix}{\bm{\Lambda}}
\newcommand{\singvalue}{\varsigma}
\newcommand{\singvalues}{\bm{\singvalue}}

\newcommand{\pcaRed}{\bm{V}^{\transpose}}
\newcommand{\pcaRec}{\bm{V}}

\newcommand{\neuralnet}{\bm{\Psi}}
\newcommand{\fullEncoder}{\neuralnet_{\text{e}}} 
\newcommand{\fullDecoder}{\neuralnet_{\text{d}}} 
\newcommand{\encoder}{\fullEncoder^*}
\newcommand{\decoder}{\fullDecoder^*}
\newcommand{\loss}{\mathcal{L}}
\newcommand{\lossfactor}{\lambda}
\newcommand{\dataSet}{\mathcal{D}}
\newcommand{\weight}{{\theta}}
\newcommand{\weights}{\bm{\weight}}
\newcommand{\Weights}{\bm{\Theta}}
\newcommand{\activationFunction}{h}
\newcommand{\nUnits}{n_\text{u}}
\newcommand{\nLayers}{n_\text{l}}
\newcommand{\numData}{n_{\mathrm{data}}}
\newcommand{\regPosDef}{\epsilon}
\newcommand{\weightsPH}{\weights_{\mathrm{pH}}}
\newcommand{\neuralnetWeights}{\neuralnet_{\mathrm{pH}}}
\newcommand{\weightsNeuralnetWeights}{\weights_{\neuralnet}}
\newcommand{\weightsAutoenc}{\weights_{\mathrm{AE}}}
\newcommand{\fullEncoderWithWeights}{\neuralnet_{\text{e,} \weightsAutoenc}}
\newcommand{\fullDecoderWithWeights}{\neuralnet_{\text{d,} \weightsAutoenc}}

\newcommand{\error}{\bm{e}}
\newcommand{\errorProj}{\error_{\text{proj}}}
\newcommand{\errorJac}{\error_{\text{jac}}}
\newcommand{\errorLatent}{\error_{\latentState}}
\newcommand{\errorState}{\error_{\states}}
\newcommand{\errorLatentMean}{\bar{\error}_{\latentState}}
\newcommand{\errorStateMean}{\bar{\error}_{\states}}

\newcommand{\pendulumLength}{l}
\newcommand{\pendulumAngle}{\varphi}
\newcommand{\pendulumAngVel}{\omega}
\newcommand{\pendulumPos}{\rho}
\newcommand{\gravitation}{g}
\newcommand{\nSims}{n_\text{s}}

\newcommand{\lipschitzConst}{L_{\fullDecoder, \overline{\boundedNeighbourhood}}}
\newcommand{\HamiltonianMin}{\Hamiltonian_{\text{min}}}
\newcommand{\boundedNeighbourhood}{U}
\newcommand{\boundedNeighbourhoodApprox}{\approximate\boundedNeighbourhood}
\newcommand{\timeBoundary}{t_{\partial}}
\newcommand{\constBoundedSolutions}{C_{\latentStateSpace}}
\newcommand{\funStorage}{S}
\newcommand{\stateEquilibrium}{\phStatePoint_{\mathrm{e}}}

\newcommand{\evalField}[2][\states]{#2|_{#1}}
\newcommand{\domainLyapunov}{W}
\newcommand{\domainLyapunovApprox}{\approximate\domainLyapunov}
\newcommand{\totalDiff}{D}
\newcommand{\JacState}{\totalDiff_{\states}}
\newcommand{\JacLatentState}{\totalDiff_{\latentState}}
\newcommand{\stateSpaceApprox}{\approximate{\stateSpace}}
\newcommand{\phStateApprox}{\approximate\states}
\newcommand{\phJApprox}{\overline\phJ}
\newcommand{\phRApprox}{\overline\phR}
\newcommand{\phBApprox}{\overline\phB}
\newcommand{\phOutputApprox}{\overline\phOutput}
\newcommand{\HamiltonianApprox}{\overline\Hamiltonian}
\newcommand{\funStorageApprox}{\overline\funStorage}
\newcommand{\stateEquilibriumApprox}{\statesApproxPoint_{\mathrm{e}}}

\begin{abstract}
    \small
    \textbf{Abstract}
    Conventional physics-based modeling techniques involve high effort, e.g.,~time and expert knowledge, while data-driven methods often lack interpretability, structure, and sometimes reliability. To mitigate this, we present a data-driven system identification framework that derives models in the port-Hamiltonian (pH) formulation.
    This formulation is suitable for multi-physical systems while guaranteeing the useful system theoretical properties of passivity and stability.

    Our framework combines linear and nonlinear reduction with structured, physics-motivated system identification.
    In this process, high-dimensional state data obtained from possibly nonlinear systems serves as input for an autoencoder, which then performs two tasks: (i) nonlinearly transforming and (ii) reducing this data onto a low-dimensional latent space.
    In this space, a linear pH system, that satisfies the pH properties per construction, is parameterized by the weights of a neural network. The mathematical requirements are met by defining the pH matrices through Cholesky factorizations.
    The neural networks that define the coordinate transformation and the pH system are identified in a joint optimization process to match the dynamics observed in the data while defining a linear pH system in the latent space.
    The learned, low-dimensional pH system can describe even nonlinear systems and is rapidly computable due to its small size.

    The method is exemplified by a parametric mass-spring-damper and a nonlinear pendulum example, as well as the high-dimensional model of a disc brake with linear thermoelastic behavior.
\end{abstract}

\msc{93B30, 37E99}

\keywords{system identification, port-Hamiltonian systems, structure-preserving reduced-order modeling, autoencoder}

\section{Introduction}\label{sec:intro}
\emph{Mathematical models} and their numerical approximation allow us to perform simulations of complex phenomena which replace expensive or infeasible real-world experiments.
Moreover, a mathematical model enables more complex model-based tasks like simulations, optimization, control, or uncertainty quantification.

However, researchers are often faced with the challenges
(CH1-1) that deriving a mathematical and computational model requires expensive expert knowledge and a long development time, (CH1-2) that the underlying phenomena remain poorly understood and consequently violate fundamental physical constraints, or (CH2) that a model may be available but it is too computationally intensive for the intended use.
At the same time, sensors have become cheaper, and the amount of collected data, such as measurements or camera data, has increased.
Thus, current research tries to develop methods that derive models from such data, which we refer to in the following as \emph{(data-based) model discovery methods}.
These methods are relevant in \emph{system identification}, which develops methods to identify mathematical models from data to face challenges (CH1-1) and (CH1-2).
If the underlying models are very high-dimensional,
classical methods from system identification typically become computationally intractable.
In this case, \emph{non-intrusive model (order) reduction} (non-intrusive MOR) can be used.
Moreover, non-intrusive MOR offers methods to derive a computationally efficient surrogate model for the intractable models from challenge (CH2).
These techniques assume that, although the intractable model is high-dimensional{, its set of all solutions can be well-approximated with a low-dimensional manifold}, which is used to obtain a low-dimensional, computationally efficient model.

To improve the prediction capability of model discovery methods, prior knowledge can be incorporated, which is referred to as \emph{knowledge infusion}.
This is typically achieved by restricting the class of the discovered models to a set of models that guarantees desirable physical properties.
This can include laws of conservation, symmetries, or certain structures that should occur in the system.
The idea is that the discovered model is restricted from being able to behave physically, thus improving the prediction capability, especially in the regime of extrapolation. In particular, we are interested in the model discovery of \emph{port-Hamiltonian systems} through the \emph{concept of coordinates and dynamics}.

\emph{Port-Hamiltonian} (pH) systems are energy-based formulations that offer so-called ports to interconnect with controllers or other systems \cite{SchaftJeltsema14}, which is favorable for modular multi-physical modeling (see e.g.\ \cite{Rettberg23}).
The structure of pH systems allows the guarantee of desirable system-theoretical properties like passivity, stability, or boundedness of solutions under mild assumptions.
Moreover, port-Hamiltonian systems are closed under interconnection, i.e., connecting pH systems again results in a pH system, which is an attractive property for network modeling \cite{Mehrmann23}.

The idea of the \emph{concept of coordinates and dynamics} is that the complexity of the solution of a system is closely tied to the coordinate system in which it is described in~\cite{BruntonKutz2022}.
Accordingly, a prescribed coordinate system may not be suited for an efficient description of the underlying dynamics.
Hence, the concept of coordinates and dynamics simultaneously discovers a dynamical system along with a (nonlinear) coordinate transformation such that the dynamics in the transformed coordinates are simpler (e.g., in the sense that the transformed dynamics are parsimonious).
A typical approach to parameterize the nonlinear coordinate transformation are so-called autoencoders.
\emph{Autoencoder-based} model discovery methods have been developed, e.g., in \cite{Champion19,Bakarji2023}.
The advantage of autoencoders is that these can work with unlabelled training data and thus do not need time-intensive labeling.
Moreover, they can be used for nonlinear dimension reduction, which allows model discovery methods to be used even for high-dimensional data (e.g., from discretized partial differential equations (PDEs)).

In our work, we develop two model discovery methods that derive a pH system from data, namely
(i) the (parametric) \emph{port-Hamiltonian identification network}~(\phin{}) and
(ii) the \emph{autoencoder-based port-Hamiltonian identification network}~(\aphin{}).
While (i) is intended to identify a low-dimensional pH system directly,
(ii) makes use of an autoencoder-based dimension reduction and is designed to derive an intrinsically low-dimensional pH model from high-dimensional data.

\subsection{Main contributions}
Our three main contributions are:
\begin{enumerate}
    \item We develop two new model discovery methods, \phin{} and \aphin{},
          which are able to discover a port-Hamiltonian model from data and are relevant for
          \begin{itemize}
              \item data-driven system identification of port-Hamiltonian systems,
              \item structure-preserving, non-intrusive MOR of port-Hamiltonian systems, and
              \item structure-revealing, non-intrusive MOR of port-Hamiltonian systems.
          \end{itemize}
    \item For the \aphin{}, we prove that, despite the coordinate transformation with the autoencoder, the desirable system-theoretical properties like passivity, Lyapunov stability, and boundedness of solutions, are transferred from the low-dimensional into the original high-dimensional space.
    \item We numerically demonstrate our approach on the academic examples of a parametric mass-spring-damper system and a nonlinear pendulum and further show its capabilities for a high-dimensional model of a thermo-mechanical disc brake based on a finite element (FE) based discretized PDE.
\end{enumerate}

\subsection{State-of-the-art}
With the three different interpretations of our method listed in the previous subsection,
the new methods tackle multiple contemporary research topics at once.
In the following, we highlight related work.

\paragraph*{Data-driven System Identification (of Port-Hamiltonian Systems)}
The desire to discover models in an automated fashion directly from data has driven many researchers in various science and engineering disciplines. Early works from this field can be found in \cite{AastroemEykhoff71, NarendraParthasarathy90}. Two popular approaches in that regard either use symbolic regression to compose functions out of simple operations and functions~\cite{Bongard2007, Schmidt2009}, or apply sparse regression on a set of function candidates and combine them linearly~\cite{Brunton2016, Champion19, Chen2021}. In \cite{Conti2024}, coordinates and coefficients are identified in a generative manner to generate reduced-order models with embedded uncertainty quantification.
Moreover, Gaussian processes have been used for the identification of ordinary differential equations (ODEs) \cite{BeckersEtAl22}, differential-algebraic equations (DAEs) \cite{ZaspelGuenther24}, PDEs in~\cite{Raissi2017} and for ODEs describing the Euler--Lagrange equations in~\cite{Offen2024}.
Another frequent topic of interest are approaches for discovery under partial observations~\cite{Bakarji2023}.
Many model discovery approaches seek to identify effective coordinate systems, e.g., ones in which nonlinear dynamics appear linear~\cite{Lusch2018, Brunton2022}, see Koopman theory, or ones that are low-dimensional \cite{Champion19, Conti2023}.
For a comprehensive overview of parsimonious model discovery, we refer to \cite{Kutz2022, Brunton2023}.

For the model discovery of pH systems, there is active research, e.g, to derive models in the frequency domain~\cite{Schwerdtner2021} or in the time domain using an adapted version of dynamic mode decomposition~\cite{Morandin23} or to infer linear pH realizations using input--output data from the time domain in~\cite{Cherifi2022} or in~\cite{Ortega2023} for single-input, single-output systems. In \cite{GoyalEtAl23}, the authors use a pH-like decomposition of the system matrix to infer stable quadratic models through operator inference which was introduced in~\cite{PEHERSTORFER2016}. Moreover,~\cite{Desai21} introduced pH neural networks (PHNNs) that parameterize the flow of low-dimensional pH systems as neural networks and consequently do not obtain the underlying system operators. Similarly, \cite{Neary2023} uses neural networks to create low-dimensional pH systems while making use of the network modeling aspect of pH systems to split the problem into simple subsystems. On the other hand, in \cite{Salnikov2023}, machine learning is used to identify a connectivity structure to reformulate a known unstructured ODE into a pH system. In \cite{Yildiz2024}, the authors identify a quadratic \textit{Hamiltonian} system from data by using an autoencoder to either lift nonlinear data or reduce high-dimensional data while weakly enforcing a symplectic structure in the latent space.

\paragraph{Structure-preserving MOR for Port-Hamiltonian Systems}
Port-Hamiltonian systems have desirable system-theoretical properties and allow for modular network modeling. Structure-preserving MOR preserves these properties even in the reduced space, allowing the combination of full- and reduced-order models and enabling stable long-term simulations. Many general projection-based methods have been adapted to a structure-preserving variant for pH systems. There exist structure-preserving methods of tangential interpolation \cite{BeattieEtAl22,GugercinEtAl12}, spectral factorization \cite{BreitenUnger22}, moment-matching \cite{PolyugavanderSchaft10,IonescuAstolfi13,EggerEtAl18} and balanced truncation \cite{GuiverOpmeer13,BreitenEtAl22,BorjaEtAl23}. Power-conserving methodologies that reduce the differential geometric Dirac structure are described in \cite{PolyugavanderSchaft12, HauschildEtAl19}. Optimization-based approaches have been developed in \cite{MoserLohmann20, SchwerdtnerVoigt23}. A sensitivity analysis of structure-preserving projection methods for the application of a classical guitar is given in \cite{Rettberg23}.
In the special case of Hamiltonian systems, techniques from randomized linear algebra have been used in \cite{Herkert23} to reduce the computational cost of basis generation.

The model reduction of nonlinear pH systems is part of active research, and many open questions remain persistent. Nevertheless, early results can be found in \cite{BeattieGugercin11, ChaturantabutEtAl16}, where a POD-based and an $\mathcal{H}_2$ quasi-optimal selection of subspaces is developed. Generalizations of balanced truncation to nonlinear pH systems exist in \cite{KawanoScherpen18, SarkarScherpen23}. In \cite{Liljegren-Sailer20, LiljegrenSailer2022}, MOR is performed for a nonlinear flow problem and in \cite{Schulze23} for general transport-dominated phenomena. Recent work performs structure-preserving MOR through a nonlinear separable approximation ansatz \cite{Schulze2023}. Finally, an autoencoder-based reduction that requires knowledge of the FOM operators is described in \cite{LepriEtAl23}.

\paragraph*{Autoencoder-based (non-)intrusive MOR}
When MOR is used to derive low-dimensional surrogate models, one classification is whether the reduction uses explicit access to the underlying operators (intrusive MOR) or not (non-intrusive MOR).
Non-intrusive MOR is relevant if the underlying operators cannot be accessed, e.g., since the operators are contained in a closed-source software package, as is often the case in commercial software.

Autoencoder-based, \emph{intrusive} MOR has been discussed, e.g., in \cite{Kashima2016, Hartmann2017, Lee20}.
These works spawned various extensions, e.g., with a focus on hyperreduction via shallow masked autoencoders \cite{Kim2022},
knowledge-infused formulations for Hamiltonian systems \cite{Buchfink23,Brantner2023},
specific constrained autoencoders that define a projection \cite{Otto2023},
and differential geometric formulations \cite{FalS18,Otto2023,Buchfink2024}.

Autoencoder-based, \emph{non-intrusive} methods have been developed in, e.g.,~\cite{Fresca21, Fresca2022, Cote2023, Kneifl2023, kneifl2024, Pichi2024}. They solely rely on data to create black-box surrogate models capturing the essential dynamics of the data-generating model. Various autoencoder architectures are used in this area, such as convolutional~\cite{Fresca21}, graph-convolutional~\cite{Gruber2022, Pichi2024,kneifl2024}, or variational ones~\cite{Kneifl2023}. Still, also linear projections like the POD~\cite{Hesthaven2018} or combinations of POD with an autoencoder~\cite{Fresca2022} are deployed to create suitable low-dimensional latent spaces. A knowledge-infused approach for Hamiltonian systems has been discussed in \cite{Sharma2022} for classical operator inference, in \cite{Cote2023} based on autoencoders and Hamiltonian neural networks, and in \cite{Sharma2023} for a quadratic approximation.

\section{Method}
This section describes in detail the methodology developed for identifying pH systems. For this purpose,
we give a general formulation of system identification and non-intrusive MOR in our problem setup (\Cref{sec:problem_setup}),
introduce pH systems and the corresponding desirable system-theoretical properties (\Cref{sec:port_Hamiltonian}),
discuss our framework for identifying pH dynamics (\Cref{sec:identification_pH}) and embed it in a non-intrusive model reduction scheme.
Finally, we discuss how system-theoretical properties transfer through decoding to the state space (\Cref{sec:systheo_properties_preserved}).

\subsection{Problem Setup: System identification and non-intrusive MOR}
\label{sec:problem_setup}

We aim to identify a (possibly parametric) initial value problem from data,
which is relevant in system identification (with full-state observations) and non-intrusive MOR.
Additionally, we impose the structure of a pH system to guarantee desirable system-theoretical properties like passivity for the identified model.

In the following, we motivate and describe our problem setting.
All considered vector spaces are finite-dimensional.
Objects that we assume to be given are marked with (G$\ast$),
while the objects to be identified are marked with ($\stateSpace\ast$) and ($\latentStateSpace\ast$).
We assume to be given
\begin{itemize}
    \item[(G1)] an $\stateDim$-dimensional vector space $\stateSpace \cong \Rdim^{N}$, referred to as \emph{state space},
    \item[(G2)] a \emph{time interval} $\timeSpace \vcentcolon= [0, \timeFinal]$ with $\timeFinal > 0$,
    \item[(G3)] (optionally) a \emph{parameter set} $\paramSpace \subset \Rdim^{\paramDim}$, $\paramDim \in \N_0$, and \label{given:parameter set}
    \item[(G4)] $\numData \in \N$ \emph{(full-state) observations} $\{ \statesTraj[t_i; \params_i] \}_{i=1}^{\numData} \subset \stateSpace$ at different \emph{time steps} $\{ t_i \}_{i=1}^{\numData} \subset \timeSpace$ and \emph{parameter vectors} $\{ \params_i \}_{i=1}^{\numData} \subset \paramSpace$.
\end{itemize}

Then, our problem setting is to identify
\begin{itemize}
    \item[($\stateSpace$1)] a (possibly parametric) right-hand side $\rhsState: \stateSpace \times \timeSpace \times \paramSpace \to \stateSpace$, and
\end{itemize}
that defines the initial value problem
\begin{align}\label{eq:system_state}
    \begin{split}
        \statesApproxDotTraj &= \rhsState (\statesApproxTraj, t; \params )\\
        \statesApproxTraj[0; \params] &= \statesApprox_0(\params)= \statesTraj[0; \params]
    \end{split}
     & \text{such that for all $i \in \{1, \dots, \numData \}$:}              &
     & \statesApproxTraj[t_i; \params_i] \approx \statesTraj[t_i; \params_i],
\end{align}
i.e., the identified initial value problem should be able to describe the data given in (G4).
For high-dimensional spaces, $N \gg 1$, identifying the objects ($\stateSpace 1$) becomes computationally intensive or even intractable.
This is typically counteracted by additionally assuming to be given
\begin{itemize}
    \item[(G5)] a low-dimensional vector space $\latentStateSpace \cong \Rdim^{\latentStateDim}$, $\dim(\latentStateSpace) =\vcentcolon \latentStateDim \ll \stateDim$,
        referred to as the \emph{latent state space},
\end{itemize}
to identify
\begin{itemize}
    \item[($\latentStateSpace$1)] a (possibly parametric) right-hand side $\rhsLatentState: \latentStateSpace \times \timeSpace \times \paramSpace \to \latentStateSpace$,
    \item[($\latentStateSpace$2)] a (possibly parametric) initial value $\phState_0: \paramSpace \to \latentStateSpace$, and
    \item[($\latentStateSpace$3)] a mapping $\fullDecoder \in \contFun^1(\latentStateSpace, \stateSpace)$, referred to as the \emph{decoder},
        and a mapping $\fullEncoder \in \contFun^1(\stateSpace, \latentStateSpace)$,
        referred to as the \emph{encoder},
\end{itemize}
that define a low-dimensional initial value problem on the latent space with
\begin{align}\label{eq:low_dim_model}
    \begin{split}
        \phStateDotTraj &= \rhsLatentState (\phStateTraj, t; \params )\\
        \phState(0, \params) &= \underbrace{\fullEncoder(\states_0(\params))}_{=\vcentcolon \phState_0(\params)}
    \end{split}
     & \text{such that for all $i \in \{1, \dots, \numData \}$:}                         &
     & \fullDecoder( \phStateTraj[t_i; \params_i] ) \approx \statesTraj[t_i; \params_i].
\end{align}
This problem setting is subject to two different research areas depending on properties of the data-generating system and the data $\{ \statesTraj[t_i; \params_i] \}_{i=1}^{\numData} \subset \stateSpace$ itself:
(i) If the data-generating system is unknown or explicit access to the underlying operators is not possible, this task is referred to \emph{data-driven system identification};
(ii) If the data-generating system is high-dimensional but the operators are not accessible, e.g., due to closed-source software, the problem setting can be attributed to the field of \emph{non-intrusive MOR}.

To implement prior physical knowledge or mathematical guarantees, the right-hand side can be modified to incorporate the desired properties, which is referred to \emph{assuming additional structure (in the right-hand side)}.
A typical workflow in non-intrusive MOR is that this structure is already present in the data-generating system.
If the same structure is reflected in the low-dimensional initial value problem \eqref{eq:low_dim_model},
such techniques are referred to as \emph{structure-preserving MOR techniques}.
In practice, however, the structure may not be known for the data-generating system.
If the objects ($\latentStateSpace\ast$) exist with additional structure in the right-hand side $\rhsLatentState$,
we refer to our method as a \emph{structure-revealing MOR technique}. In the present paper, the structure is the one of pH systems.

The prototypical system relevant in the following is an initial value problem with inputs and outputs.
Thus, we additionally assume to be given
\begin{itemize}
    \item[(G6)] an $\phPortDim$-dimensional vector space $\inputSpace \cong \Rdim^{\phPortDim}$ for the inputs and the corresponding vector space $\phOutputSpace \cong \Rdim^{\phPortDim}$ for the outputs, which are in combination typically referred to an \emph{(input--output) port}, and
    \item[(G7)] an arbitrary \emph{input signal} $\phInputU: \timeSpace \to \inputSpace$.
\end{itemize}
Instead of identifying a general right-hand side $\rhsLatentState$ as in ($\latentStateSpace$1), the goal is to identify
\begin{itemize}
    \item[($\latentStateSpace$4)] an \emph{input function} $\funInput: \inputSpace \times \paramSpace \to \latentStateSpace$, and an \emph{output function} $\funOutput: \latentStateSpace \times \paramSpace \to \phOutputSpace$, and
    \item[($\latentStateSpace$5)] a time-independent function $\rhs: \latentStateSpace \times \paramSpace \to \latentStateSpace$ describing the \emph{state-dependent part of the right-hand side}
\end{itemize}
to formulate an initial value problem on the latent space with inputs and outputs
\begin{align}\label{eq:low_dim_sys_w_in_out}
    \begin{split}
        \phStateDotTraj &= \rhs(\phStateTraj; \params )
        + \funInput( \phInputUTraj ; \params )\\
        \phStateTraj[0, \params] &= \phState_0(\params),\\
        \phOutputTraj &= \funOutput(\phStateTraj; \params ),
    \end{split}
\end{align}
which is a special case of \eqref{eq:low_dim_model} with the respective choice of the right-hand side $\rhsLatentState$.
In the following, we mention the dependency on the parameter vector $\params$ only if it is relevant in the specific context and skip it otherwise for the sake of brevity.

\subsection{Port-Hamiltonian Systems}
\label{sec:port_Hamiltonian}
Port-Hamiltonian systems are energy-based models, which offer the advantage that energy can be used to interpret system dynamics and prove desirable system-theoretical properties.
In comparison to classical energy-based approaches like Lagrangian and Hamiltonian systems,
pH systems naturally allow to include (a)~dissipative terms and (b)~a port to interact with other models or controllers.
To reflect the two extensions in the balance of energy, pH systems formulate the energy balance with a dissipation inequality, which states that the system energy is bounded by the energy transferred through the ports.
The ports are especially useful (b1)~for network modelling to connect different physical domains physically consistently \cite{SchaftJeltsema14,Mehrmann23,Duindam09},
(b2)~hierarchical modeling since the clear definition of ports allows to replace parts of the model seamlessly,
or (b3)~for connecting to controllers.
Examples of the combination of pH systems with control are
control through interconnection~\cite{Schaft20},
interconnection and dissipation assignment passivity-based control (IDA-PBC)~\cite{OrtegaGarcia-Canseco04},
and measures for the distance to instability and its optimization~\cite{GillisEtAl18, GillisSharma17}.
Another advantage of pH systems is that the energy-based nature of pH systems allows to guarantee important system-theoretical properties like passivity, stability, and boundedness of solutions under mild assumptions (see \Cref{sec:mathematical_properties}).

The general framework of pH systems allows for various representations.
Among others, there are linear time-variant, descriptor, nonlinear and infinite-dimensional pH systems \cite{SchaftJeltsema14, Mehrmann23, Duindam09}.
In the present work, we identify linear time-invariant (LTI) pH systems (also referred to as linear input-output Hamiltonian system with dissipation, e.g., in \cite[Chapter~10]{SchaftJeltsema14}) on the latent space.
They are (possibly parametric) initial value problems with inputs and outputs \eqref{eq:low_dim_sys_w_in_out}, which are defined by
\begin{itemize}
    \item a symmetric and positive definite matrix $\phQ(\params) \in \Rdim^{\phDim \times \phDim}$ referred to as the \emph{energy matrix},
          \begin{align}
              \phQ(\params) = \phQ(\params)^\transpose \succ 0,  \label{eq:ph_property_Q}
          \end{align}
          which defines a scalar-valued function $\Hamiltonian: \latentStateSpace \times \paramSpace \to \Rdim,\, (\phStatePoint, \params) \mapsto \frac{1}{2}\phStatePoint^\transpose \phQ(\params) \phStatePoint$ referred to as the \emph{Hamiltonian} or \emph{energy},
    \item a skew-symmetric matrix $\phJ(\params) \in \Rdim^{\phDim \times \phDim}$ referred to as the \emph{structure matrix},
          \begin{align}
              \phJ(\params) = -\phJ(\params)^\transpose,  \label{eq:ph_property_J}
          \end{align}
    \item a symmetric and positive semidefinite matrix $\phR(\params)\in \Rdim^{\phDim \times \phDim}$ referred to as the \emph{dissipation matrix},
          \begin{align}
              \phR(\params) = \phR(\params)^\transpose \succeq 0  \label{eq:ph_property_R}
          \end{align}
    \item a matrix $\phB(\params)\in\Rdim^{\phDim\times\phPortDim}$ referred to as the \emph{port matrix},
\end{itemize}
formulating an initial value problem with inputs and outputs of the form
\begin{equation}
    \begin{aligned}
        \label{eq:linear_ph_system}
        \phStateDotTraj
        =                          & \big( \phJ(\params) - \phR(\params) \big) \phQ(\params) \phStateTraj
        + \phB(\params) \phInputUTraj,                                                                    \\
        \phStateTraj[0, \params] = & \phState_0(\params),                                                 \\
        \phOutputTraj
        =                          & \phB(\params)^\transpose \phQ(\params) \phStateTraj.
    \end{aligned}
\end{equation}
Skipping the dependency on the parameter vector $\params$ for the sake of brevity,
it can be shown that the change in the energy stored in the system
\begin{align}\label{eq:dissipation_inequality}
    \ddt\left[
        \Hamiltonian(\phStateTraj[t])
        \right]
    = \rT{\phOutputTraj[t]} \phInputUTraj[t]
    \underbrace{
        - \rT{(\phQ \phStateTraj[t])}  \phR (\phQ \phStateTraj[t])
    }_{
        \leq 0
    }
    \leq \rT{\phOutputTraj[t]} \phInputUTraj[t]
\end{align}
can be split in a dissipated part characterized by $\phR$ and the power of the port $\rT{\phOutputTraj[t]} \phInputUTraj[t]$.
Due to the positive semidefiniteness of $\phR$, this change can be bounded from above by the energy of the port,
referred to as a \emph{dissipation inequality}.
By integrating the above equation, we know that the energy present in the system $\Hamiltonian(\phStateTraj[t])$ is bounded by the energy present at the beginning $\Hamiltonian(\phStateTraj[0])$ plus the energy that entered or left through the port
\begin{align*}
    \Hamiltonian(\phStateTraj[t]) \leq \Hamiltonian(\phStateTraj[0]) + \int_0^{\timee}  \rT{\phOutputTraj[s]} \phInputUTraj[s] \,\mathrm{d}s.
\end{align*}

A more general formulation is the nonlinear pH system (also referred to as \emph{input--output Hamiltonian system with dissipation} \cite[Chapter~10]{SchaftJeltsema14}), where the Hamiltonian $\Hamiltonian$ can depend on the pH state non-quadratically
and the pH system matrices $\phJ$, $\phR$, and $\phB$ can depend on the state $\phJ\in \contFun(\phSpace;\Rdim^{\phDim \times \phDim})$, $\phR\in \contFun(\phSpace;\Rdim^{\phDim \times \phDim})$, and $\phB\in \contFun(\phSpace;\Rdim^{\phDim \times \phPortDim})$.
Skipping possible parameter dependency and denoting the state dependency with $\evalField[{{\phStateTraj[\timee]}}]{(\cdot)}$,
the nonlinear pH system reads
\begin{equation}
    \begin{aligned}
        \phStateDotTraj[\timee] & = (\evalField[{{\phStateTraj[\timee]}}]{\phJ} - \evalField[{{\phStateTraj[\timee]}}]{\phR}) \nabla\Hamiltonian(\phStateTraj[\timee]) + \evalField[{{\phStateTraj[\timee]}}]{\phB} \phInputUTraj[\timee], \quad \phStateTraj[0] = \phState_0 \in \phSpace \\
        \phOutputTraj[\timee]   & = \rT{\evalField[{{\phStateTraj[\timee]}}]{\phB}} \nabla\Hamiltonian(\phStateTraj[\timee]),
        \label{eq:nonlinear_ph_system}
    \end{aligned}
\end{equation}
where the properties of the system matrices from the listing above are satisfied point-wise
\begin{align}
    \evalField[\phStatePoint]{\phJ} & = -\evalField[\phStatePoint]{\phJ}^\transpose,         &
    \evalField[\phStatePoint]{\phR} & = \evalField[\phStatePoint]{\phR}^\transpose\succeq 0, &
                                    & \forall\phStatePoint\in\phSpace.
    \label{eq:nonlinear_ph_properties}
\end{align}
By construction, the dissipation inequality \eqref{eq:dissipation_inequality} holds also for the nonlinear pH system \eqref{eq:nonlinear_ph_system}.

\subsubsection{System-theoretical Properties of Port-Hamiltonian Systems}\label{sec:mathematical_properties}

The dissipation inequality \eqref{eq:dissipation_inequality} is the main ingredient for multiple desirable system-theoretical properties,
which we discuss in the following.

\paragraph*{Passivity}
The initial value problem with inputs and outputs \eqref{eq:low_dim_sys_w_in_out} is called \emph{passive}, if there exists a function $\funStorage: \latentStateSpace \to \Rdim$,
which (i) is non-negative, i.e. $\funStorage(\phStatePoint) \geq 0 \;\forall \phStatePoint \in \latentStateSpace$,
and (ii) satisfies the \emph{dissipation inequality}
$\ddt \left[ \funStorage(\phStateTraj[t]) \right] \leq \rT{\phInputUTraj[t]} \phOutputTraj[t]$
(see, e.g., \cite[Sec.~7]{SchaftJeltsema14}).
The function $\funStorage$ is typically referred to as a \emph{storage function} and the product $\rT{\phInputUTraj[t]} \phOutputTraj[t]$ is typically understood as the \emph{supplied power}.
For pH systems with a Hamiltonian bounded from below, i.e., there exists $\HamiltonianMin \in \Rdim$ such that $\Hamiltonian(\phStatePoint) \geq \HamiltonianMin$ for all $\phStatePoint \in \phSpace$, this is fulfilled naturally by choosing $\funStorage(\phStatePoint) = \Hamiltonian(\phStatePoint) - \HamiltonianMin$ due to the dissipation inequality \eqref{eq:dissipation_inequality}.

\paragraph*{Lyapunov Stability}
The storage function of a passive system can be used to obtain so-called Lyapunov stable points.
To formulate these, we assume that $\norm{\cdot}: \latentStateSpace \to \Rdim_{\geq 0}$ is a norm on $\latentStateSpace$.

\begin{mydef}[Equilibrium Point and Lyapunov Stability, e.g., {\cite[Sec.~12]{Meyer17}}]
    For an initial value problem \eqref{eq:low_dim_sys_w_in_out} with zero inputs, i.e., $\inputs(t) = 0$ for all $\timee \in \timeSpace$,
    a point $\stateEquilibrium \in \latentStateSpace$ is called an \emph{equilibrium point} if $\rhs(\stateEquilibrium) = 0$.
    An equilibrium point $\stateEquilibrium \in \latentStateSpace$ is called \emph{Lyapunov stable}
    if for all $\varepsilon > 0$ there exists $\delta > 0$ such that
    $\norm{\stateEquilibrium - \phState_0} \leq \delta$
    implies that
    $\norm{\stateEquilibrium - \phStateTraj[t; \phState_0]} \leq \varepsilon$ for all $t \in \timeSpace$,
    where $\phStateTraj[t; \phState_0]$ denotes the solution of \eqref{eq:low_dim_sys_w_in_out} with initial value $\phState_0$.
\end{mydef}

\begin{mythm}[Lyapunov's Stability Theorem, e.g., {\cite[Thm.~12.1.1]{Meyer17}}]\label{thm:lyapunov_stability}
    Consider a domain $\boundedNeighbourhood \subset \latentStateSpace$ with an equilibrium point $\stateEquilibrium \in \boundedNeighbourhood$.
    If there exists a function $V: \boundedNeighbourhood \to \Rdim$ such that
    \begin{enumerate}
        \item[(i)] $\stateEquilibrium$ is a strict local minimum, i.e., $V(\stateEquilibrium) < V(\phStatePoint)$ for all $\phStatePoint \in \boundedNeighbourhood \setminus \{ \stateEquilibrium \}$, and
        \item[(ii)] $\ddt \left[ V(\phStateTraj[t]) \right] \leq 0$ along solutions $\phStateTraj[t] \in \boundedNeighbourhood$,
    \end{enumerate}
    then $\stateEquilibrium$ is Lyapunov stable and $V$ is referred to as a \emph{Lyapunov function}.
\end{mythm}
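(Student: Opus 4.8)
The plan is the standard sublevel-set trapping argument for Lyapunov functions. First I would fix $\varepsilon > 0$ and, shrinking it if necessary, arrange that the closed ball $\overline{B}_\varepsilon := \{ \phStatePoint \in \latentStateSpace : \norm{\phStatePoint - \stateEquilibrium} \le \varepsilon \}$ is contained in $\boundedNeighbourhood$; this is possible because the domain $\boundedNeighbourhood$ contains $\stateEquilibrium$ as an interior point. The sphere $\Sigma_\varepsilon := \{ \phStatePoint \in \latentStateSpace : \norm{\phStatePoint - \stateEquilibrium} = \varepsilon \}$ is closed and bounded, hence compact (all spaces here are finite-dimensional), so the continuous function $V$ attains a minimum $m := \min_{\phStatePoint \in \Sigma_\varepsilon} V(\phStatePoint)$ on it, and by the strict-minimum hypothesis (i) we get $m > V(\stateEquilibrium)$.

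Next I would use continuity of $V$ at $\stateEquilibrium$ to pick $\delta \in (0, \varepsilon]$ such that $\norm{\phStatePoint - \stateEquilibrium} \le \delta$ implies $V(\phStatePoint) < m$, and then claim this $\delta$ does the job. Fix any initial value $\phState_0$ with $\norm{\stateEquilibrium - \phState_0} \le \delta$ and write $\phStateTraj[t;\phState_0]$ for the corresponding solution. Suppose, for contradiction, that the solution leaves $\overline{B}_\varepsilon$ at some time in $\timeSpace$. Since $t \mapsto \norm{\stateEquilibrium - \phStateTraj[t;\phState_0]}$ is continuous and takes the value $\norm{\stateEquilibrium - \phState_0} \le \delta \le \varepsilon$ at $t = 0$, the intermediate value theorem yields a first time $t^\ast \in \timeSpace$ with $\norm{\stateEquilibrium - \phStateTraj[t^\ast;\phState_0]} = \varepsilon$, and the trajectory stays inside $\overline{B}_\varepsilon \subset \boundedNeighbourhood$ on $[0, t^\ast]$. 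On that interval hypothesis (ii) applies and makes $t \mapsto V(\phStateTraj[t;\phState_0])$ non-increasing, so $V(\phStateTraj[t^\ast;\phState_0]) \le V(\phState_0) < m$; but $\phStateTraj[t^\ast;\phState_0] \in \Sigma_\varepsilon$ forces $V(\phStateTraj[t^\ast;\phState_0]) \ge m$, a contradiction. Hence the solution never leaves $\overline{B}_\varepsilon$, i.e. $\norm{\stateEquilibrium - \phStateTraj[t;\phState_0]} \le \varepsilon$ for all $t \in \timeSpace$, which is exactly Lyapunov stability; calling $V$ a \emph{Lyapunov function} is then just terminology.

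\textbf{Main obstacle.} The one delicate point is the ``first exit time'' step: it rests on continuity of the flow $t \mapsto \phStateTraj[t;\phState_0]$ and on the fact that, as long as the trajectory lies in $\overline{B}_\varepsilon$, it lies in $\boundedNeighbourhood$ so that the decay condition (ii) may legitimately be invoked — which is precisely why $\varepsilon$ is shrunk at the outset. Because $\timeSpace = [0, \timeFinal]$ is a bounded interval, there are no finite-escape-time subtleties; on $[0,\infty)$ the same trapping argument would, as a bonus, give global existence since the solution stays in the compact set $\overline{B}_\varepsilon$. It is also worth flagging where each hypothesis enters: the strictness in (i) is what makes $m > V(\stateEquilibrium)$ a strict inequality, hence $\{ V < m \}$ a genuine neighbourhood of $\stateEquilibrium$ and the final contradiction possible, while (ii) is used only along the trapped portion of the trajectory.
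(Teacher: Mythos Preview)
Your argument is the standard sublevel-set trapping proof of Lyapunov's stability theorem and is correct as written; the use of compactness of $\Sigma_\varepsilon$, the strict minimum to get $m > V(\stateEquilibrium)$, and the first-exit-time contradiction are exactly the textbook route. Note, however, that the paper does not actually prove this theorem: it is quoted as a classical result with a reference to \cite[Thm.~12.1.1]{Meyer17}, so there is no in-paper proof to compare against.
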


An equilibrium point of a passive system with zero inputs, $\phInputUTraj[t] = 0$ for all $t \in \timeSpace$, which is additionally a strict local minimum of the storage function $\funStorage$, is automatically Lyapunov stable with $\funStorage$ as a Lyapunov function,
since the assumption (ii) in \Cref{thm:lyapunov_stability} is fulfilled by the dissipation inequality.

\paragraph*{Boundedness of Solutions}
Under additional assumptions, it can be shown that the solution of a pH system is contained in a bounded neighborhood.

\begin{mythm}[Boundedness of Solutions]\label{thm:boundedness_of_solutions}
    For a given pH system \eqref{eq:linear_ph_system} or \eqref{eq:nonlinear_ph_system} with initial value $\phState_0 \in \latentStateSpace$,
    assume there exists a bounded open neighborhood $\boundedNeighbourhood \subset \phSpace$ such that
    \begin{enumerate}
        \item[(i)] the initial value is within the neighborhood, i.e., $\phState_0 \in \boundedNeighbourhood$,
        \item[(ii)] the Hamiltonian on the boundary is higher than at the initial value, i.e., $\Hamiltonian(\phStatePoint) > \Hamiltonian(\phState_0)$ for all $\phStatePoint \in \partial \boundedNeighbourhood$,
        \item[(iii)] there is no input, $\phInputUTraj[t] = 0$ for all $t \in \timeSpace$.
    \end{enumerate}
    Then the solution is contained in this neighborhood for all times, $\phStateTraj[t] \in \boundedNeighbourhood$ for all $t \in \timeSpace$ and thus, there exists $\constBoundedSolutions \geq 0$ such that $\norm{\phStateTraj[t] - \phState_0}_{\latentStateSpace} \leq \constBoundedSolutions$ for all $\timee \in \timeSpace$.
\end{mythm}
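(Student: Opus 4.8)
The plan is to use the dissipation inequality \eqref{eq:dissipation_inequality}: since there is no input by assumption~(iii), it shows that the Hamiltonian is non-increasing along the solution, and a first-exit-time argument then rules out that the solution can reach the boundary $\partial\boundedNeighbourhood$ without contradicting assumption~(ii). Concretely, I would first recall that \eqref{eq:dissipation_inequality} holds for both the linear pH system \eqref{eq:linear_ph_system} and the nonlinear one \eqref{eq:nonlinear_ph_system}; evaluating it along the solution $\phStateTraj[t]$ with $\phInputUTraj[t] = 0$ gives $\ddt\left[\Hamiltonian(\phStateTraj[t])\right] \le \rT{\phOutputTraj[t]}\phInputUTraj[t] = 0$, and integrating from $0$ to any $t$ in the interval of existence yields the monotonicity estimate $\Hamiltonian(\phStateTraj[t]) \le \Hamiltonian(\phState_0)$.

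Next, I would argue by contradiction: suppose the solution is not contained in $\boundedNeighbourhood$ for all $t \in \timeSpace$, and set $\timeBoundary \vcentcolon= \inf\{\, t \in \timeSpace : \phStateTraj[t] \notin \boundedNeighbourhood \,\}$. Since $\phStateTraj[0] = \phState_0 \in \boundedNeighbourhood$ by~(i) and $\boundedNeighbourhood$ is open, it follows that $\timeBoundary > 0$ and $\phStateTraj[t] \in \boundedNeighbourhood$ for all $t < \timeBoundary$; by continuity of $t \mapsto \phStateTraj[t]$ the exit point $\phStateTraj[\timeBoundary]$ then lies in the closure $\overline{\boundedNeighbourhood}$. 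It cannot be an interior point of $\boundedNeighbourhood$, since openness and continuity would otherwise keep the solution inside $\boundedNeighbourhood$ on a strictly larger interval, contradicting the definition of $\timeBoundary$ (or, if that interval reaches $\timeFinal$, contradicting the supposition). Hence $\phStateTraj[\timeBoundary] \in \partial\boundedNeighbourhood$, so assumption~(ii) gives $\Hamiltonian(\phStateTraj[\timeBoundary]) > \Hamiltonian(\phState_0)$, contradicting the monotonicity estimate at $t = \timeBoundary$. Therefore $\phStateTraj[t] \in \boundedNeighbourhood$ for all $t \in \timeSpace$, and since $\boundedNeighbourhood$ is bounded, $\constBoundedSolutions \vcentcolon= \sup_{\phStatePoint \in \boundedNeighbourhood}\norm{\phStatePoint - \phState_0}_{\latentStateSpace}$ is finite and non-negative and satisfies $\norm{\phStateTraj[t] - \phState_0}_{\latentStateSpace} \le \constBoundedSolutions$ for all $t \in \timeSpace$.

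The step I expect to be the main obstacle is the first-exit-time argument above: one has to make rigorous that a continuous trajectory leaving the open set $\boundedNeighbourhood$ must actually touch $\partial\boundedNeighbourhood$, which relies only on continuity of the solution and openness of $\boundedNeighbourhood$ but needs to be stated carefully. A secondary technical point is global existence: a priori the pH solution is guaranteed only on a maximal subinterval of $\timeSpace$, but the argument confines it to $\boundedNeighbourhood$, whose closure is compact, so standard continuation results for the $\contFun^1$ right-hand side extend the solution to all of $\timeSpace$; alternatively, one may run the entire argument on the maximal interval of existence and obtain the stated bound there.
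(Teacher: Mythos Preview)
Your proposal is correct and follows essentially the same route as the paper's proof: dissipation inequality with zero input gives $\Hamiltonian(\phStateTraj[t]) \le \Hamiltonian(\phState_0)$, a contradiction argument shows the continuous trajectory would have to hit $\partial\boundedNeighbourhood$ if it left $\boundedNeighbourhood$, and assumption~(ii) then yields the contradiction; the constant $\constBoundedSolutions$ is defined identically. Your treatment of the first-exit-time argument and the remark on global existence are in fact more careful than the paper, which simply asserts that continuity produces a boundary point and does not discuss continuation.
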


A proof for this theorem can be found in \cref{App Proof Boundedness of Solutions}. Roughly speaking, the boundary of the neighborhood $\boundedNeighbourhood$ builds a barrier of higher energy than the initial value, which cannot be passed since the input into the system is zero.

\subsection{Identification of Port-Hamiltonian Dynamics}
\label{sec:identification_pH}

We introduce two methods to discover pH systems from data:
(i) The \emph{port-Hamiltonian identification network (\phin{})} in a nonparametric and a parametric setting,
and (ii) the \emph{Autoencoder-based port-Hamiltonian Identification Network (\aphin{})} which uses an autoencoder especially to tackle high-dimensional problems.
The \phin{} does not use an autoencoder, which is why for this case the variables $\statesTraj = \latentStateTraj$ and $\statesApproxTraj = \phStateTraj$ from the notation in \Cref{sec:problem_setup} can be identified by setting $\stateSpace = \latentStateSpace$ and using the identity $\text{id}_{\latentStateSpace}: \latentStateSpace \to \latentStateSpace, \latentStatePoint \mapsto \latentStatePoint$ as decoder and encoder, i.e., $\fullDecoder \equiv \fullEncoder \equiv \text{id}_{\latentStateSpace}$.
To match the notation of the \aphin{} in the later sections, we use the variable names $\latentStateTraj$ and $\phStateTraj$ for the coordinates in \phin{}.

To understand the underlying concept,
we briefly describe how a general linear system with input can be learned in a nonparametric setting without autoencoder.
Assume to be given full-state observations $\{ \latentStateTraj[t_i] \}_{i=1}^{\numData} \subset \latentStateSpace$, their time-derivatives $\left\{ \latentStateDotTraj[t_i] \right\}_{i=1}^{\numData} \subset \latentStateSpace$, and input signals $\{ \phInputUTraj[t_i] \}_{i=1}^{\numData} \subset \inputSpace$ at different {time steps} $\{ t_i \}_{i=1}^{\numData} \subset \timeSpace$.
To discover the system dynamics, the right-hand side $\rhsLatentState$ can be parameterized by network parameters $\weights$ also called \emph{weights} which are optimized to match the given data.
The weights are used to parameterize operators on the right-hand side, e.g., by assembling a dense $n \times m$ matrix $\M$ from weights $\weights_{\M}$ via
\begin{align*}
    \mathcal{M}_{n \times m}:
    \Rdim^{n \cdot m} \to \Rdim^{n \times m},\;                           &
    \weights_{\M} \mapsto
    \left[\begin{array}{ccc}
                  \weight_{\M, 1}                 & \cdots & \weight_{\M, m}         \\
                  \vdots                          &        & \vdots                  \\
                  \weight_{\M, (n-1) \cdot m + 1} & \cdots & \weight_{\M, n \cdot m}
              \end{array}\right], &
                                                                          & \weights_{\M} = [\weight_{\M, 1}, \dots, \weight_{\M, n \cdot m}].
\end{align*}
Then,
an initial value problem with inputs and outputs \eqref{eq:low_dim_sys_w_in_out} based on a linear state-dependent part of the right-hand side and linear input function can be parameterized with
\begin{align}
    \phStateDotTraj[\timee]
                                                                               & = \rhsLatentState(\phStateTraj[\timee], t)
    \vcentcolon= \ltiAid \phStateTraj[\timee] + \ltiBid \phInputUTraj[\timee], &
    \ltiAid                                                                    & \vcentcolon= \mathcal{M}_{\latentStateDim \times \latentStateDim}(\weights_{\A}), &
    \ltiBid                                                                    & \vcentcolon= \mathcal{M}_{\latentStateDim \times \phPortDim}(\weights_{\phB}),
    \label{eq: lin approx}
\end{align}
where the weights~$\weights_{\A} \in \Rdim^{\latentStateDim^2}$, $\weights_{\phB} \in \Rdim^{\latentStateDim \cdot \phPortDim}$, $\weights = \rTsb{ \rT{\weights_{\A}}, \; \rT{\weights_{\phB}} }\in\Rdim^{\latentStateDim \cdot (\latentStateDim+\phPortDim)}$ parameterize the system matrix and input matrix respectively.
To discover system dynamics that match the given data,
the loss function
\begin{align*}
    \loss_{\text{LTI}}(\weights)
    \vcentcolon=
    \sum_{i=1}^{\numData} \norm{\latentStateDotTraj[\timee_i] - \left(\ltiAid_{\weights} \latentStateTraj[\timee_i] + \ltiBid_{\weights} \phInputUTraj[\timee_i] \right)}^2_2
\end{align*}
is optimized, where we denote the dependency on the weights as a subindex $(\cdot)_{\weights}$ if we present the parameterized matrices in combination with the loss.
This setup aims to discover the unknown system matrices~$\ltiAid$ and $\ltiBid$ by optimizing the loss function using gradient-based optimization methods such as ADAM \cite{Kingma14}.
More generally, the right-hand side $\rhsLatentState$ might have a more complex dependence in the weights $\weights$, e.g., parameterized by an artificial neural network.
Thus, we refer to this approach as an \emph{identification network} and to \eqref{eq: lin approx} as an \emph{LTI identification network}.

In this work, however, we are not just interested in the identification of general LTI systems but especially in the identification of linear pH systems. Therefore, we construct an identification framework that automatically generates pH systems per the architecture's design.

\paragraph*{\phin{}: Port-Hamiltonian Identification Network}
The idea of moving from the LTI identification framework to a pH identification network is based on adding more structure to the identified system matrices.
Accordingly, the system matrix~$\ltiAid$ is replaced by three matrices~$\phJid,\, \phRid,\, \phQid$ to take the respective pH matrices into account.
Each of the according matrices is parameterized by weights $\weights_{\phJ}\in\Rdim^{\latentStateDim_{\mathrm{skew}}}$, $\weights_{\phR}\in\Rdim^{\latentStateDim_{\mathrm{sym}}}$, and $\weights_{\phQ}\in\Rdim^{\latentStateDim_{\mathrm{sym}}}$,
where $\latentStateDim_{\mathrm{skew}} = \frac{\latentStateDim(\latentStateDim-1)}{2}$ and $\latentStateDim_{\mathrm{sym}} = \frac{\latentStateDim(\latentStateDim+1)}{2}$ define the number of entries in a skew-symmetric or symmetric matrix of size $\latentStateDim \times \latentStateDim$.
Based on the two operators that assemble a lower triangular matrix\footnote{Abbreviated as “tril” in consistency with the corresponding MATLAB and NumPy commands.}
\begin{align}
     & \mathcal{M}_{\mathrm{tril}}: \, \mathbb{R}^{\latentStateDim_{\mathrm{sym}}} \to \mathbb{R}^{\latentStateDim \times \latentStateDim}, \; \weights \mapsto \left[\begin{array}{cccc}
                                                                                                                                                                              \weight_1 &           &        & \bm{0}                                   \\
                                                                                                                                                                              \weight_2 & \weight_3 &        &                                          \\
                                                                                                                                                                              \vdots    & \vdots    & \ddots &                                          \\
                                                                                                                                                                              \cdots    & \cdots    & \cdots & \weight_{\latentStateDim_{\mathrm{sym}}}
                                                                                                                                                                          \end{array}\right]
\end{align}
or a strict lower triangular matrix
\begin{align}
     & \mathcal{M}_{\mathrm{stril}}: \, \mathbb{R}^{\latentStateDim_{\mathrm{skew}}} \to \mathbb{R}^{\latentStateDim \times \latentStateDim}, \; \weights \mapsto \left[\begin{array}{cccc}
                                                                                                                                                                                0         &        &                                           & \bm{0} \\
                                                                                                                                                                                \weight_1 & 0      &                                           &        \\
                                                                                                                                                                                \vdots    & \ddots & \ddots                                    &        \\
                                                                                                                                                                                \cdots    & \cdots & \weight_{\latentStateDim_{\mathrm{skew}}} & 0
                                                                                                                                                                            \end{array}\right] \;,
\end{align}
the pH matrices are parameterized as
\begin{align}
    \Weights_{\phJ} & \vcentcolon= \mathcal{M}_{\mathrm{stril}}(\weights_{\phJ}),        &
    \phJid          & \vcentcolon= \Weights_{\phJ} - \rT{\Weights_{\phJ}}                &
                    & \implies                                                           &
    \phJid          & = -\phJid^\transpose,
    \\
    \Weights_{\phR} & \vcentcolon= \mathcal{M}_{\mathrm{tril}}(\weights_{\phR}),         &
    \phRid          & \vcentcolon= \Weights_{\phR} \rT{\Weights_{\phR}}                  &
                    & \implies                                                           &
    \phRid          & = \phRid^\transpose \succeq \mathbf{0},
    \\
    \Weights_{\phQ} & \vcentcolon= \mathcal{M}_{\mathrm{tril}}(\weights_{\phQ}),         &
    \phQid          & \vcentcolon= \Weights_{\phQ} \rT{\Weights_{\phQ}} + \regPosDef\eye &
                    & \implies                                                           &
    \phQid          & = \rT{\phQid} \succ \mathbf{0}.
\end{align}
By construction, the resulting matrix~$\phJid$ is skew-symmetric, and the matrices $\phRid$ and $\phQid$ are symmetric and positive (semi)definite.
To ensure positive definiteness for~$\phQid$, we add a regularization term $\regPosDef\eye$ with a regularization constant $\regPosDef=\numprint{1e-6}$ for single precision calculations.
A visualization of this approach is shown in \cref{fig: ph network}.

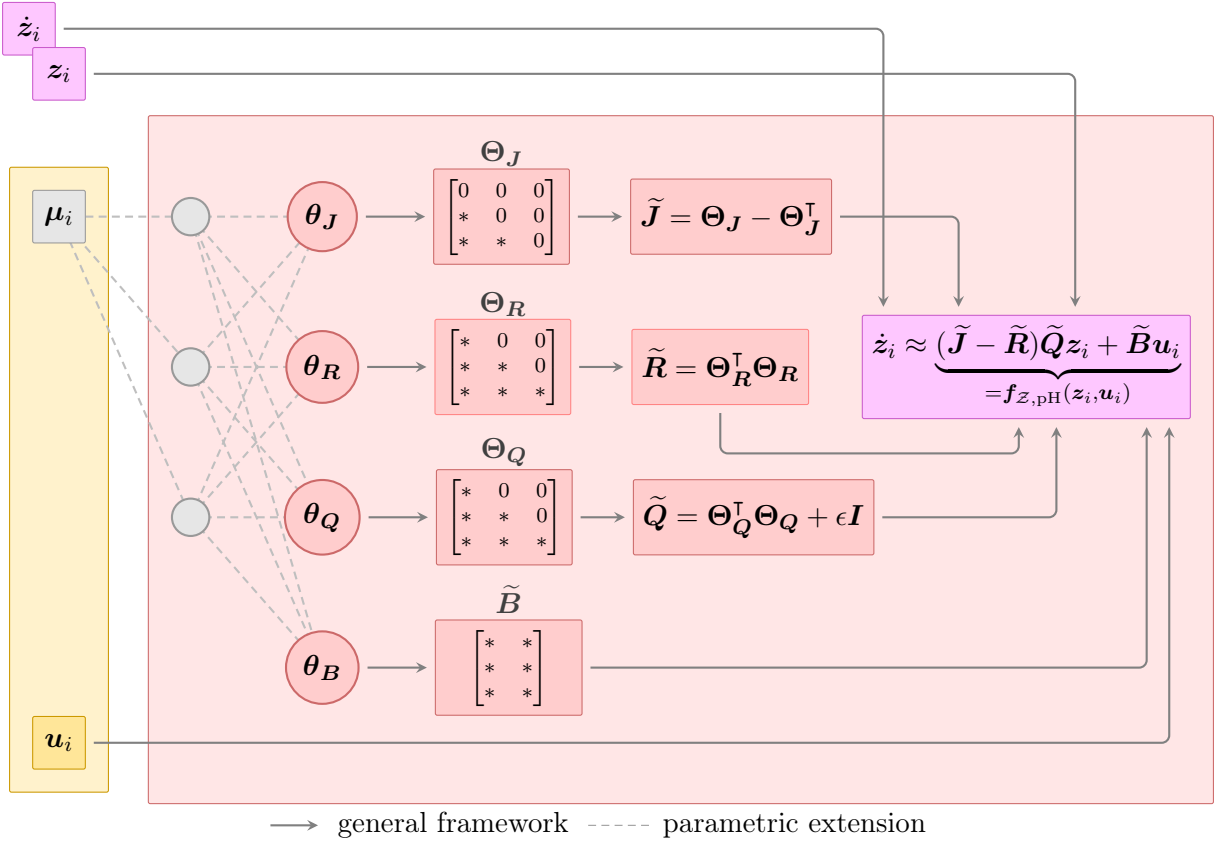
\begin{figure}[htb]
    \centering



\newcount\latentspace
\newcount\hideparams
\latentspace=1
\hideparams=0

\ifnum\latentspace=0
	\renewcommand{\latentState}{\states};
\fi
\begin{tikzpicture}[
  input_opacity/.style={opacity=1,},
  params_opacity/.style={opacity=1,},
  pca_opacity2/.style={opacity=\pcaOpacityRec,},
  ae_opacity/.style={opacity=\aeOpacity,},
]

\ifnum\hideparams=0
    \drawNodes{fcnn}{{{a}/gray, 
    		{,,}/lightgray, 
    		{,,,}/gray, 
    	}
    }
    
    \node [cell,
    params_opacity,
    draw=lightgray!80!black,
    fill=lightgray!40!white] (params) at (fcnn-1-1) {$\params_i$};
\fi
\ifnum\hideparams=1
    \drawNodes{fcnn}{{{$\mu$}/lightgray, 
    		{,,}/lightgray, 
    		{,,,}/lightgray, 
    	}
    }
\fi
    
\node [cell,
input_opacity,
draw=inputColor!80!black,
fill=inputColor!40!white] (inputs) at ($(params) - (0, 7)$) {$\inputs_i$};

\ifnum\hideparams=0
    \denselyConnectNodes{fcnn}{{1, 3, 4}}{lightgray}

    \node [cell,
        input_opacity,
        draw=latentColor!80!black, 
            fill=latentColor!40!white
        ] (statedot) at ($(params)+(-0.4, 2.5)$) {$\dot{\latentState}_i$};
        
    \node [cell,
        input_opacity,
        draw=latentColor!80!black, 
            fill=latentColor!40!white
        ] (state) at ($(params)+(0, 1.9)$) {$\latentState_i$};

    \node [neuron,
        input_opacity,
        draw=weightColor!80!black,
        fill=weightColor!40!white] (weightJ) at (fcnn-3-1) {$\weights_{\phJ}$};
    \node [neuron,
        input_opacity,
        draw=weightColor!80!black,
        fill=weightColor!40!white] (weightR) at (fcnn-3-2) {$\weights_{\phR}$};
    \node [neuron,
        input_opacity,
        draw=weightColor!80!black,
        fill=weightColor!40!white] (weightQ) at (fcnn-3-3) {$\weights_{\phQ}$};
    \node [neuron,
        input_opacity,
        draw=weightColor!80!black,
        fill=weightColor!40!white] (weightB) at (fcnn-3-4) {$\weights_{\phB}$};
\fi

\node[cell, right= of weightJ, 
    draw=weightColor!80!black,
    fill=weightColor!40!white, label={[label distance=-1mm, mylabel]: $\Weights_{\phJ}$},] 
        (offdiag1) {\scriptsize$\begin{bmatrix}0&0&0 \\ *&0&0 \\ *&*&0\end{bmatrix}$};
\node[cell, right=8mm of offdiag1, minimum height=10mm,
    draw=weightColor!80!black,
    fill=weightColor!40!white,] (J) {$\phJid=\Weights_{\phJ}-\Weights_{\phJ}^\transpose$};

\draw[Arrow] (weightJ) -- (offdiag1) ;
\draw[Arrow] (offdiag1) -- (J);

\node[cell, right=of weightR, 
    draw=weightColor,
    fill=weightColor!40!white, label={[label distance=-1mm, mylabel]: $\Weights_{\phR}$}] (diag2) {\scriptsize$\begin{bmatrix} * & 0 & 0 \\ * & * & 0 \\ * & * & * \end{bmatrix}$};
\node[cell, right=8mm of diag2, minimum height=10mm,
    draw=weightColor,
    fill=weightColor!40!white] (R) {$\phRid=\Weights_{\phR}^{\transpose} \Weights_{\phR}$};
\draw[Arrow] (weightR) -- (diag2);
\draw[Arrow] (diag2) -- (R);

\node[cell, right=of weightQ, 
    draw=weightColor!80!black,
    fill=weightColor!40!white, label={[label distance=-1mm, mylabel]: $\Weights_{\phQ}$}] (diag3) {\scriptsize$\begin{bmatrix} * & 0 & 0 \\ * & * & 0 \\ * & * & * \end{bmatrix}$};
\node[cell, right=8mm of diag3, minimum height=10mm,
    draw=weightColor!80!black,
    fill=weightColor!40!white] (Q) {$\phQid=\Weights_{\phQ}^{\transpose} \Weights_{\phQ} + \regPosDef\eye$};
\draw[Arrow] (weightQ) -- (diag3);
\draw[Arrow] (diag3) -- (Q);

\node[cell, minimum width=19.5mm, 
draw=weightColor!80!black,
fill=weightColor!40!white, right=of weightB, label={[label distance=-.5mm, mylabel]: ${\phBid}$}] (B) {\scriptsize$\begin{bmatrix}*&*\\\ast&*\\ \ast &*\end{bmatrix}$};
\draw[Arrow] (weightB) -- (B);

\node[cell, fill=latentColor!40!white, draw=latentColor!80!black, right=7mm of R] (result) {
    $\dot{\latentState}_i\approx
        \underbrace{
            ({\phJid} - {\phRid}) \phQid \latentState_i 
            + \phBid \inputs_i
        }_{= \rhsLatentStatePH(\latentState_i, \inputs_i)}
        $
    };
\draw[Arrow] (J) -| ($(result.north) - (0.9, 0)$);
\draw[Arrow] (R) |- +(1,-1.15) -| ($(result.south) - (0.1, 0)$);
\draw[Arrow] (Q) -| ($(result.south) + (0.4, 0)$);
\draw[Arrow] (state) -| ($(result.north) + (0.65, 0)$);
\draw[Arrow] (statedot) -| ($(result.north) - (1.9, 0)$);
\draw[Arrow] (inputs) -| ($(result.south) + (1.9, 0)$);
\draw[Arrow] (B) -| ($(result.south) + (1.6, 0)$);
\node[below=6mm of B] (belowB) {};
\node[above=3mm of weightJ] (aboveweightJ) {};

\begin{pgfonlayer}{bg1}
  \node[fit=(inputs) (params), cell, draw=inputColor!80!black, fill=inputColor!20!white, inner sep=0.3cm, label={[label distance=0mm, mylabel]:}] (box) {};
  \node[fit=(aboveweightJ) (fcnn-2-1) (result) (offdiag1) (belowB), cell, draw=phColor!80!black, fill=phColor!20!white, inner sep=0.3cm, label={[label distance=0mm, mylabel]:}] (box) {};
\end{pgfonlayer}

\end{tikzpicture}
\begin{tikzpicture}
\matrix[
    matrix of nodes,
    anchor=center,
    inner sep=0.1em,
    nodes={anchor=west},
    column sep=3pt,
    ] (legend) {
        \draw[Arrow] (0,0) -- (24pt,0); &  \node {general framework}; &&
        \draw[lightgray!80!black, densely dashed] (0,0) -- (24pt,0); &  \node {parametric extension}; &&
        \\
};
\end{tikzpicture}
    \caption{
        Schematic illustration of the port-Hamiltonian identification network (\phin{}).
        We denote dependency on the time step $\timee_i$ and parameter $\params_i$ as a subindex $(\cdot)_i$ for the sake of brevity.
    }
    \label{fig: ph network}
\end{figure}

Similar to the LTI case, we describe the right-hand side prescribing a pH system as
\begin{align}
    \phStateDotTraj[t] = {\rhsLatentStatePH}(\phStateTraj[t], \phInputUTraj[t])
    \vcentcolon= ( \phJid - \phRid ) \phQid \phStateTraj[t] + \phBid \phInputUTraj[t].
    \label{eq: ph network}
\end{align}
We identify the required matrices during training by optimizing the loss
\begin{equation}
    \loss_{\mathrm{pH}}(\weightsPH) = \sum_{i=1}^{\numData} \norm{\latentStateDotTraj[\timee_i]
        - \left(
        (\phJid_{\weightsPH} - \phRid_{\weightsPH}) \phQid_{\weightsPH} \latentStateTraj[\timee_i] + \phBid_{\weightsPH} \inputs(\timee_i)
        \right)}^2_2,
    \label{eq: ph loss}
\end{equation}
where we denote the dependency on the weights $\weightsPH \vcentcolon= \rTsb{ \rT{\weights_{\phJ}},\;\rT{\weights_{\phR}},\; \rT{\weights_{\phQ}},\; \rT{\weights_{\phB}}}$ for the pH matrices as subindex $(\cdot)_{\weightsPH}$ in this loss function.
We refer to this approach as \emph{port-Hamiltonian identification network~(\phin{})}.

However, the description~$\eqref{eq: ph network}$ is in general insufficient for parametric systems, e.g., if the model should respect changes in global parameters such as material properties, see Assumption (G3) in \cref{sec:problem_setup}.

\paragraph*{\phin{}, parameter-dependent case}%
Our \phin{} approach can be extended to learn a parameter-dependent pH system from parameter-dependent
full-state observations $\{ \latentStateTraj[t_i; \params_i] \}_{i=1}^{\numData} \subset \latentStateSpace$, their time-derivatives $\left\{ \latentStateDotTraj[t_i; \params_i] \right\}_{i=1}^{\numData} \subset \latentStateSpace$, and input signals $\{ \phInputUTraj[t_i] \}_{i=1}^{\numData} \subset \inputSpace$ at time steps $\{ t_i \}_{i=1}^{\numData} \subset \timeSpace$ and parameter vectors $\{ \params_i \}_{i=1}^{\numData} \subset \paramSpace$ that are considered constant throughout one trajectory. We account for these dependencies by allowing the pH matrices~$\phJid, \phRid, \phQid,$ and~$\phBid$ to depend on the parameter vector $\params$.
In detail, we use a deep fully-connected neural network $\neuralnetWeights$ parameterized by network weights $\weightsNeuralnetWeights$ that maps a parameter vector to weights for the pH matrices~$\weightsPH = \neuralnetWeights(\params)$.
The corresponding pH matrices are constructed from these parameter-dependent weights as explained in the previous section.
This results in a loss function
\begin{equation}
    \loss_{\mathrm{pH},\params}(\weightsNeuralnetWeights)
    =
    \sum_{i=1}^{\numData}
    \norm{\dot{\latentState}_i - \left(
        \left(
        \phJid_{\neuralnetWeights(\params_i; \weightsNeuralnetWeights)} - \phRid_{\neuralnetWeights(\params_i; \weightsNeuralnetWeights)}
        \right)
        \phQid_{\neuralnetWeights(\params_i; \weightsNeuralnetWeights)} \latentState_i
        + \phBid_{\neuralnetWeights(\params_i; \weightsNeuralnetWeights)} \inputs_i
        \right)}^2_2
\end{equation}
which optimizes the network weights $\weightsNeuralnetWeights$,
where we abbreviate $\dot{\latentState}_i \vcentcolon= \latentStateDotTraj[\timee_i; \params_i]$, ${\latentState}_i \vcentcolon= \latentStateTraj[\timee_i; \params_i]$, and $\inputs_i \vcentcolon= \phInputUTraj[\timee_i]$,
and denote the usage of the neural network $\weightsPH = \neuralnetWeights(\params)$ in the subindex of the according matrices together with the dependency on the network weights $\weightsNeuralnetWeights$.
In the following, we omit the dependency on the parameter vector~$\params$ for the sake of brevity.
This does not change the general procedure and can be added depending on the application.

\paragraph*{Dimensionality and Coordinate Limitations}
The restriction to approximate a system linearly may be too limiting in many cases and can lead to non-negligible deviations from the ground-truth data. However, following the Koopman theory, every nonlinear system can be expressed as a linear system in suitably high dimensional state space. For structured systems, this equivalence is not yet proven but is assumed to be reasonable, see e.g. \cite{Yildiz2024}.

At the same time, systems are often not given in a pH description, and even if they are, identifying a dynamical system for extremely high-dimensional systems leads to major challenges due to the vast possibilities of state interactions in high-dimensional system matrices. Hence, the \phin{} approach in its current formulation is limited for data possibly generated by (i) a non-pH system and (ii) by a highly nonlinear system. To circumvent these limitations, we embed the linear system identification into an autoencoder-based outer loop that allows to map the data from originally nonlinear systems to linear pH approximations through nonlinear transformations.

\begin{figure}[htb]
    \centering
    \input{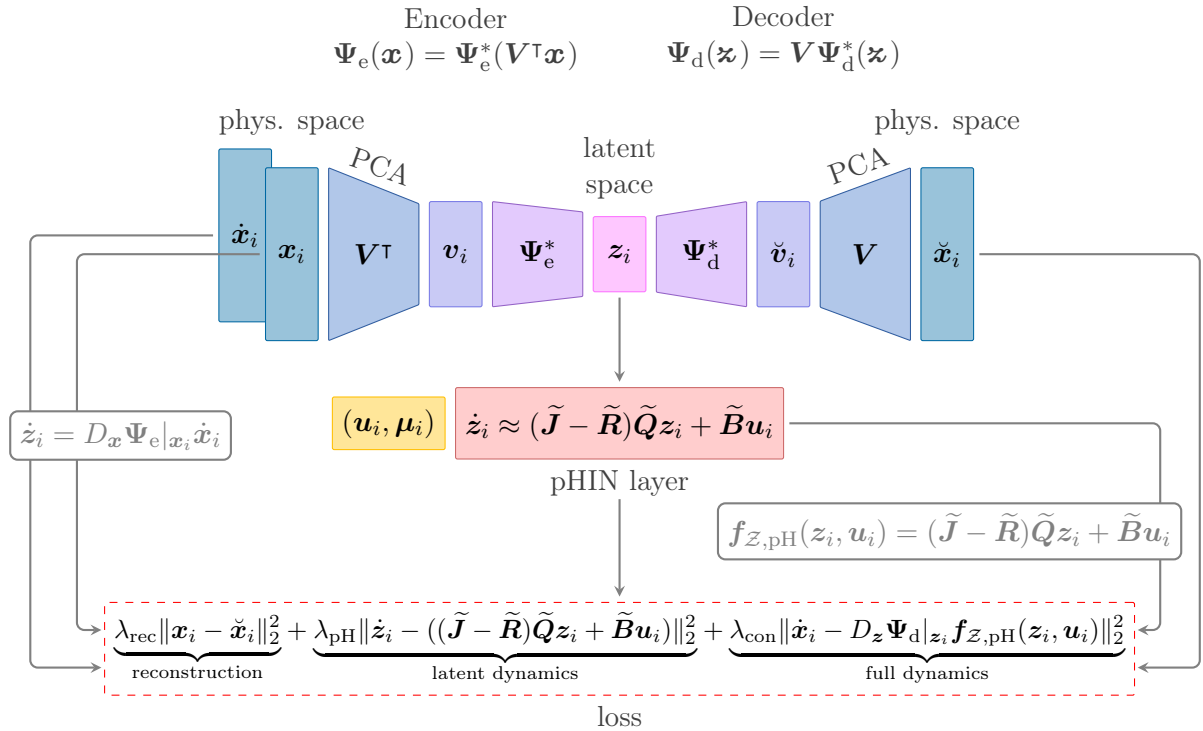}
    \caption{
        Schematic illustration of the \aphin{}~approach: An autoencoder composed of linear (PCA) and nonlinear (autoencoder) reduction identifies a low-dimensional manifold on which a linear pH system is identified on the latent space using our proposed \phin{} approach.
        We denote dependency on the time step $\timee_i$ and parameter $\params_i$ as a subindex $(\cdot)_i$ for the sake of brevity.}
    \label{fig:phae}
\end{figure}

\subsubsection{\aphin{}: An Autoencoder-based port-Hamiltonian Identification Network}
\label{sec:pH-autoencoder}
As motivated in \Cref{sec:intro},
the concept of coordinates and dynamics can be helpful in identifying a coordinate system in which the dynamics become simpler or low-dimensional via autoencoder-based model discovery.
In this section, we introduce the \emph{autoencoder-based port-Hamiltonian Identification Network (\aphin{})} to combine the idea of coordinates and dynamics with model discovery for pH systems.
Especially if the autoencoder performs a reduction in the dimension, this approach is attractive for non-intrusive model reduction.
In the following, we assume to be given data in the physical space $\stateSpace$ (which is not equal to the latent state space $\latentStateSpace$ as in the previous sections due to the autoencoder).
Thus, we assume to be given full-state observations $\{ \statesTraj[\timee_i] \}_{i=1}^{\numData}$,
velocities $\{ \statesDotTraj[\timee_i] \}_{i=1}^{\numData} \subset \stateSpace$, and
input signals $\{ \inputs(\timee_i) \}_{i=1}^{\numData} \subset \inputSpace$
at different time points $\{ \timee_i \}_{i=1}^{\numData} \subset \timeSpace$.
The data may also depend on parameter vectors $\params_i$, but we neglect this dependence in the following for the sake of brevity.

\paragraph*{Autoencoders: Coordinate Transformation and Dimension Reduction}
An autoencoder is a neural network that learns a low-dimensional representation of input data in the physical space $\stateSpace$ via a latent state space $\latentStateSpace$ based on two subnetworks: an encoder $\fullEncoder:\stateSpace\to\latentStateSpace$ and a decoder $\fullDecoder:\latentStateSpace\to\stateSpace$ both parameterized by network weights $\weightsAutoenc$.
For the input data $\statesTraj[\timee_i] \in \stateSpace$,
we denote the \emph{latent state} with $\latentStateTraj[\timee_i] \vcentcolon= \fullEncoder(\statesTraj[\timee_i])$
and the \emph{reconstructed latent state} with $\reconstruct{\states}(\timee_i) \vcentcolon= (\fullDecoder \circ \fullEncoder)(\statesTraj[\timee_i])$.
In our work, the autoencoder fulfills two main tasks:
(a)~For a low state space dimension $\stateDim \ll 1$, we choose $\latentStateSpace = \stateSpace$ and use the autoencoder to perform a nonlinear coordinate transformation.
(b)~For a high state space dimension $\stateDim \gg 1$, it additionally performs a dimension reduction by using a latent space $\latentStateSpace \cong \Rdim^{\latentStateDim}$ with $\latentStateDim \ll \stateDim$.
The autoencoder is trained by optimizing the reconstruction loss
\begin{align}
    \loss_{\text{rec}}(\weightsAutoenc)
    = \sum_{i=1}^{\numData} \big\lVert
    \statesTraj[\timee_i] -
    \underbrace{(\fullDecoderWithWeights \circ \fullEncoderWithWeights)(\statesTraj[\timee_i])}_{=\reconstruct{\states}_{\weightsAutoenc}(\timee_i)}
    \big\rVert^2_2 \;,
    \label{eq: rec loss}
\end{align}
where we denote the dependency on the network parameters $\weightsAutoenc$ in the subindex $(\cdot)_{\weightsAutoenc}$ in this loss.
While this loss already ensures that a suitable low-dimensional representation of a system state is found, it has no notion of pH dynamics. To acknowledge these attributes, a joint optimization of both reconstruction and pH loss is favorable.

\paragraph*{\aphin{}: Autoencoder-based Port-Hamiltonian Identification Network}
A low-dimensional system description enables an efficient identification of the corresponding pH dynamics that describe the system. Consequently, we embed \phin{} into the latent space to approximate the latent state of the autoencoder $\latentStateTraj[\timee] \in \latentStateSpace$ with a trajectory $\phStateTraj[\timee]  \in \latentStateSpace$ that follows pH dynamics (see \cref{fig:phae})
which defines an approximation $\statesApproxTraj[\timee] \vcentcolon= \fullDecoder(\phStateTraj[\timee])$ of the state $\statesTraj[\timee]$. By doing so, we introduce an inductive bias that ensures that the autoencoder prefers non-linear transformations according to which the latent dynamics appear both linear and in pH coordinates. This is loosely connected to the field of Koopman theory for dynamical systems~\cite{Lusch2018, Brunton2022}, which is built around the idea of finding a finite-dimensional coordinate system in which the dynamics of a nonlinear system appear approximately linear.

To simultaneously optimize the \phin{} and the autoencoder, we combine the pH loss~\cref{eq: ph loss} with the reconstruction loss~\cref{eq: rec loss} and add a consistency loss
\begin{align*}
    \loss_{\text{con}}(\weightsAutoenc, \weightsPH)
     & = \sum_{i=1}^{\numData} \Big\lVert
    \statesDotTraj[\timee_i]
    -
    \underbrace{
        \evalField[{{\latentStateTraj[\timee_i]}}]{\JacLatentState \fullDecoderWithWeights}
        \left(
        (\phJid_{\weightsPH}-\phRid_{\weightsPH})\phQid_{\weightsPH} \latentState(\timee_i)
        + \phBid_{\weightsPH} \inputs(\timee_i)
        \right)
    }_{\approx \statesApproxDotTraj[\timee_i]}
    \Big\rVert^2_2,
\end{align*}
which accounts for the dynamics in the original state space by comparing the reference time derivatives~$\dot{\states}$ with an approximation of $\dot{\approximate{\states}}$, as proposed in~\cite{Champion19}. This altogether results in the overall loss
\begin{equation}
    \begin{aligned}
        \loss(\weights) & = \lossfactor_{\text{rec}}\loss_{\text{rec}}(\weightsAutoenc) + \lossfactor_{\text{pH}}\loss_{\text{pH}}(\weightsPH) + \lossfactor_{\text{con}}\loss_{\text{con}}(\weights) \\
                        & =
        \sum_{i=1}^{\numData}
        \bigg(
        \lossfactor_{\text{rec}}\norm{\states(\timee_i)-\reconstruct{\states}_{\weightsAutoenc}(\timee_i)}^2_2                                                                                        \\
                        & \phantom{=
                \sum_{i=1}^{\numData}
            }
        + \lossfactor_{\text{pH}}\norm{\dot{\latentState}(\timee_i) - \left(
                (\phJid_{\weightsPH}-\phRid_{\weightsPH})\phQid_{\weightsPH} \latentState(\timee_i) + \phBid_{\weightsPH} \inputs(\timee_i)
        \right)}^2_2                                                                                                                                                                                  \\
                        & \phantom{=
                \sum_{i=1}^{\numData}
            }
        + \lossfactor_{\text{con}}
        \norm{
                \dot{\states}(\timee_i)
                -
                \evalField[{{\latentState(\timee_i)}}]{\JacLatentState \fullDecoderWithWeights}
                \left(
                (\phJid_{\weightsPH}-\phRid_{\weightsPH})\phQid_{\weightsPH} \latentState(\timee_i)
                + \phBid_{\weightsPH} \inputs(\timee_i)
                \right)
            }^2_2
        \bigg),
        \label{eq: loss}
    \end{aligned}
\end{equation}
where $\weights = \rTsb{\rT\weightsAutoenc, \rT\weightsPH}$ (or $\weights = \rTsb{\rT\weightsAutoenc, \rT\weightsNeuralnetWeights}$ for $\weightsPH = \neuralnetWeights(\params; \weightsNeuralnetWeights)$ in the parametric case) denotes the vector of all weights, the dependency on the network parameters is denoted as subindex $(\cdot)_{\weights}$ of the respective weights, and the loss factors $\lossfactor_{\text{rec}}$, $\lossfactor_{\text{pH}}$, $\lossfactor_{\text{con}} \geq 0$ are hyperparameters.
The loss can be supplemented by L1 regularization~$\loss_{\text{L1}}(\weightsPH)=\lossfactor_{\text{L1}}\norm{\weightsPH}_1$ on the \phin{} weights~$\weightsPH$ responsible for the pH matrices (or on the network weights $\weightsNeuralnetWeights$ in the parametric case with~$\loss_{\text{L1}}(\weightsNeuralnetWeights)=\lossfactor_{\text{L1}}\norm{\weightsNeuralnetWeights}_1$). In doing so, more sparse matrices can be enforced.

For a high state space dimension $\stateDim \gg 1$,
the training of autoencoder-based identification networks, in particular, the computation of~$\loss_{\text{con}}$, can be computationally expensive and memory demanding.
The computational cost can be reduced by using a linear reduction in the outermost layer of the encoder and the decoder
\begin{align}\
    \fullEncoder(\statesPoint)      & =\encoder(\pcaRed\statesPoint),     &
    \fullDecoder(\latentStatePoint) & =\pcaRec\decoder(\latentStatePoint)
\end{align}
where neural networks $\encoder$, $\decoder$ are used in combination with states $\intermediateState=\pcaRed\statesPoint, \reconstruct{\intermediateState} = \decoder(\latentStatePoint)\in\Rdim^{\intermediateStateDim}$ of intermediate dimension $\intermediateStateDim < \stateDim$ (see, e.g., \cite{Fresca2022}).
A common approach to obtain the projection matrix $\pcaRec\in\Rdim^{\stateDim\times\intermediateStateDim}$ for the linear reduction is principal component analysis (PCA, see, e.g., \cite{Volkwein2013}), which chooses the linear reduction to explain the maximum variance of the data.
By construction, however, this approach does not work for problems with slowly decaying Kolmogorov $n$-widths \cite{Buchfink2023b}. Consequently, the choice of the intermediate dimension is a trade-off between accuracy and computational cost.

For scenarios with a fast decaying Kolmogorov $n$-width, in turn, a purely linear reduction can be sufficient, making the nonlinear autoencoders obsolete. We refer to this approach as \aphin{} (linear), and both the nonlinear encoder and decoder simplify to the identity, i.e., $\encoder=\decoder=\text{id}$.

\paragraph*{Model Evaluation}\label{sec:model_evaluation}
Once the neural network weights are optimized, the discovered pH system in the latent space can be written in the form of a (parametric) LTI pH system as
\begin{subequations}
    \label{eq:pH-latent}
    \begin{align}
        \phStateDotTraj[t] & = (\phJid - \phRid) \phQid {\phStateTraj[t]} + \phBid \phInputUTraj[t] \;, \\
        \phOutput(t)       & = \phBid^\transpose \phQid {\phStateTraj[t]} \;,                           \\
        \phStateTraj[0]    & ={\phState}_0\coloneqq {\latentState}_0 = \fullEncoder(\states_0)
    \end{align}
\end{subequations}
with $\phJid = -\phJid^{\transpose} \in \Rdim^{\latentStateDim \times \latentStateDim}$, $\phRid = \phRid^{\transpose} \succeq \bm{0} \in \Rdim^{\latentStateDim \times \latentStateDim}$, $\phQid = \phQid^{\transpose} \succ \bm{0} \in \Rdim^{\latentStateDim \times \latentStateDim}$ and the encoded initial condition~$\approximate{\latentState}_0$. To preserve the pH structure and its properties also in discrete time, a structure-preserving time integration scheme, e.g. the implicit midpoint rule (IMR), is used \cite{KotyczkaLefevre19}.
Consequently, the transformation enables an efficient computation of the dynamics which shows significant accelerations compared to classical calculations in the high-dimensional state spaces.

This solution can then be transformed back into the physical space~$\statesApproxTraj[\timee]=\fullDecoder(\phStateTraj[\timee])$ via the decoder. In detail, this means that firstly, an initial condition is encoded, secondly, the latent solution for this initial condition and the corresponding inputs and parameters is calculated with a structure-preserving integrator and thirdly, the solution is decoded into the physical space. Please note that the temporal evolution of the latent states can be sensitive to the encoded initial condition. The workflow is visualized in \cref{fig:PH model}.

\begin{figure}[htb]
    \centering



\newcommand{\stateOpacity}{1}
\newcommand{\stateOpacityRec}{1}
\newcommand{\pcaOpacity}{1}
\newcommand{\pcaOpacityRec}{1}
\newcommand{\intermediateOpacity}{1}
\newcommand{\intermediateOpacityRec}{1}
\newcommand{\aeOpacity}{1}
\newcommand{\aeOpacityRec}{1}
\newcommand{\latentOpacity}{1}
\newcommand{\surrogateOpacity}{1}

\newcount\hidereconstruction
\newcount\hidereduction
\newcount\hideph
\newcount\aeOnly
\hidereconstruction=0
\hidereduction=0
\aeOnly=0
\hideph=0
\ifnum\hidereconstruction=1
	\renewcommand{\stateOpacityRec}{0.3}
    \renewcommand{\pcaOpacityRec}{0.3}
    \renewcommand{\intermediateOpacityRec}{0.3}
    \renewcommand{\aeOpacityRec}{0.3}
\fi
\ifnum\hidereduction=1
	\renewcommand{\stateOpacity}{0.3}
    \renewcommand{\pcaOpacity}{0.3}
    \renewcommand{\intermediateOpacity}{0.3}
    \renewcommand{\aeOpacity}{0.3}
\fi
\ifnum\hideph=1
    \renewcommand{\surrogateOpacity}{0.3}
\fi
\ifnum\aeOnly=1
	\renewcommand{\stateOpacity}{0.00}
    \renewcommand{\pcaOpacity}{0.0}
	\renewcommand{\stateOpacityRec}{0.0}
    \renewcommand{\pcaOpacityRec}{0.0}
    \renewcommand{\intermediateState}{\states}
\fi

\usetikzlibrary{backgrounds}

\begin{tikzpicture}[
    shorten >=1pt, shorten <=1pt,
    pca_opacity/.style={opacity=\pcaOpacity,},
    pca_opacity2/.style={opacity=\pcaOpacityRec,},
    ae_opacity/.style={opacity=\aeOpacity,},
    ae_opacity2/.style={opacity=\aeOpacityRec,},
    state_opacity/.style={opacity=\stateOpacity,},
    state_opacity2/.style={opacity=\stateOpacityRec,},
    intermediatestate_opacity/.style={opacity=\intermediateOpacity,},
    intermediatestate_opacity2/.style={opacity=\intermediateOpacityRec,},
    latent_opacity/.style={opacity=\latentOpacity,},
    surrogate_opacity/.style={opacity=\surrogateOpacity,},
  ]

\newcommand{\seperation}{.75mm}
    

    \node [cell, 
            draw=phColor!80!black,
            anchor=center,
            minimum height=9.5mm,
            surrogate_opacity, 
            fill=phColor!40!white,
            label={[label distance=0mm, surrogate_opacity, mylabel]below:pHIN layer}] 
            (ph) {$\dot{\approximate{\latentState}}(t)=(\phJid-\phRid)\phQid\phState(t) + \phBid \inputs(t)$};
    
    \node [cell, 
            label={[label distance=0mm, surrogate_opacity, mylabel]below:time integration},
            anchor=west,
            right=10mm of ph.east,
            minimum height=9.5mm, 
            ] (integration) {$\int$};

    \node [cell, 
            label={[label distance=0mm, surrogate_opacity, mylabel]},
            draw=latentColor,
            anchor=west,
            surrogate_opacity,
            right=10mm of integration.east,
            minimum height=9.5mm, 
            fill=latentColor!40!white] (phState) {$\approximate{\latentState}(t)$};

    \node [cell,
            draw=inputColor,
            anchor=east,
            surrogate_opacity,
            fill=inputColor!40!white,
            label={[label distance=0mm, surrogate_opacity, mylabel]}] (param) at ($(ph.west) + (-3*\seperation, 0.4)$) {$\params$};
            
    \node [cell,
        draw=inputColor,
        anchor=east,
        surrogate_opacity,
        fill=inputColor!40!white,
        label={[label distance=0mm, surrogate_opacity, mylabel]}] (inputs) at ($(ph.west) + (-3*\seperation, -0.4)$) {$\inputs$};


    \node [cell, 
            minimum height =23mm, 
            anchor=west,
            state_opacity,
            draw=stateColor,
            fill=stateColor!40!white] (state) at ($(ph.west) + (0, 2.5)$) {$\states_0$};
                    
    \node [trapez,
            pca_opacity,
            minimum width=23mm, 
            trapezium angle=65, 
            draw=intermediateLatentColor!80!black,  
            fill=intermediateLatentColor!40!white,
            anchor=east,
            right=\seperation of state.east,
            label={
            [label distance=0mm, pca_opacity, mylabel, rotate=-30]
            above:Encoder}
        ] (encoder) {$\fullEncoder$};
        
    \node [cell,
        draw=latentColor,
        pca_opacity,
        minimum height=9.5mm, 
        latent_opacity,
        anchor=east,
        right=\seperation of encoder.east,
        fill=latentColor!40!white] (red state) {$\latentState_0$};


    \node [trapez, 
        draw=pcaColor!80!black,  
        fill=pcaColor!40!white,
        ae_opacity2,
        trapezium angle=-65, 
            minimum width=23mm, 
            right=\seperation of phState.east,
            label={[label distance=0mm, ae_opacity2, mylabel, rotate=30]above: Decoder}
            ] (decoder) {$\fullDecoder$}; 
    
    \node [cell, label={[label distance=3mm, state_opacity2, mylabel]: phys. space}, 
            minimum height =23mm, 
            draw=stateColor,
            anchor=west,
            state_opacity2,
            text=black,
            right=\seperation of decoder.east,
            fill=stateColor!40!white] (stateReconstr) {${\approximate{\states}}(t)$};

\pgfdeclarelayer{bg1}    
\pgfsetlayers{bg1, main}
\begin{pgfonlayer}{bg1}
  \node[fit=(inputs) (param), cell, draw=inputColor!80!black, fill=inputColor!20!white, inner sep=0.1cm, label={[label distance=0mm, mylabel]:}] (box) {};
\end{pgfonlayer}

\draw [Arrow, shorten >= 1pt, surrogate_opacity, shorten <= 2pt] (ph.east) -- (integration.west);
\draw [Arrow, shorten >= 1pt, surrogate_opacity, shorten <= 2pt] (integration.east) -- (phState.west);
\draw [Arrow, shorten >= 2pt, surrogate_opacity, shorten <= 2pt] (red state.east) -| (integration.north);
\draw [Arrow, shorten >= 2pt, surrogate_opacity, shorten <= 2pt] (box.north) |- (state.west);

\end{tikzpicture}

    \caption{Model evaluation of the identified system: The identified (parametric) pH system is time-integrated using given inputs and the encoded initial conditions. Subsequently, the latent results are decoded into the high-dimensional space.
    }
    \label{fig:PH model}
\end{figure}
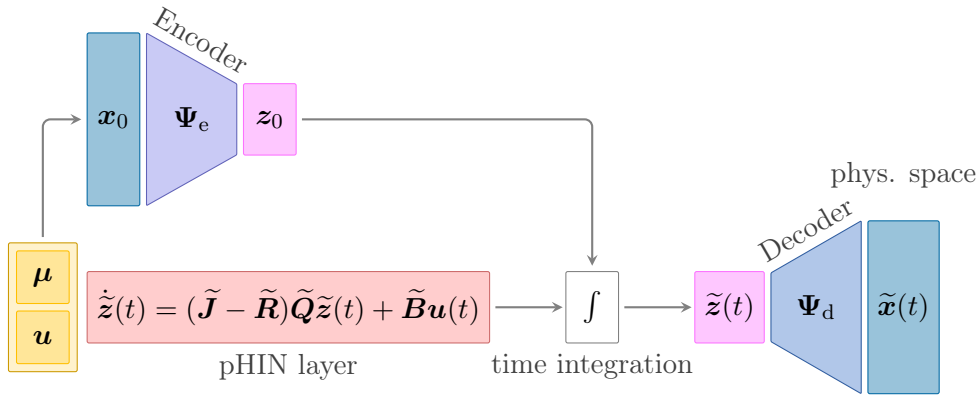

\subsection{Transfer of the Latent Model's System-theoretic Properties to the Physical Space}
\label{sec:systheo_properties_preserved}

So far, we have proposed a model discovery method to identify a pH model on the latent state space $\latentStateSpace$ from data on the state space $\stateSpace$ using an autoencoder.
Even if the structure-preserving system in latent space already has many advantages, it remains unclear whether the pH structure and the corresponding system-theoretic properties are also valid in the physical state space of the data. Consequently, we examine to what extent these properties are preserved in the following.

As detailed in \Cref{sec:mathematical_properties}, the dissipation inequality \eqref{eq:dissipation_inequality} accompanying our pH system on $\latentStateSpace$ can be used to show desirable system-theoretical properties like passivity, Lyapunov stability, and boundedness of solutions under additional assumptions.
However, by construction of our model, these properties hold on the latent space $\latentStateSpace$, while the observations are from the state space $\stateSpace$.
In this subsection, we discuss how the properties from \Cref{sec:mathematical_properties} transfer from $\phStateTraj[t] \in \latentStateSpace$ to $\statesApproxTraj[t] = \fullDecoder(\phStateTraj[t]) \in \stateSpace$.
This section is formulated for a general nonlinear pH system \eqref{eq:nonlinear_ph_system} with a Hamiltonian $\Hamiltonian: \latentStateSpace \to \Rdim$ and holds for the LTI pH system identified with \phin{} and a quadratic Hamiltonian $\Hamiltonian(\phStatePoint) = \tfrac{1}{2} \rT\phStatePoint \phQid \phStatePoint$.

For the upcoming analysis, we assume that the autoencoder fulfills
\begin{align}\label{eq:point_projection_property}
    (\fullEncoder \circ \fullDecoder)(\phStatePoint) & = \phStatePoint
    \quad \text{for all } \phStatePoint \in \latentStateSpace.
\end{align}
Differentiating this property directly yields by the chain rule that the multiplication of the Jacobians equals the identity, i.e.
\begin{align}\label{eq:tangent_projection_property}
    \evalField[\fullDecoder(\phStatePoint)]{\JacState \fullEncoder}
    \evalField[\phStatePoint]{\JacLatentState \fullDecoder}
     & = \eye \in \Rdim^{\latentStateDim \times \latentStateDim}
    \quad \text{for all } \phStatePoint \in \latentStateSpace.
\end{align}

\paragraph*{Port-Hamiltonian System on Physical Space}
We show that the pH system identified by \aphin{} on the latent state space $\latentStateSpace$ defines a nonlinear pH system on a subset of the state space $\stateSpaceApprox \vcentcolon= \fullDecoder(\latentStateSpace) \subset \stateSpace$.
Technically, $\stateSpaceApprox$ is a submanifold of $\stateSpace$ since $\stateSpaceApprox$ is homeomorphic to $\latentStateSpace \cong \Rdim^{\latentStateDim}$ by \eqref{eq:point_projection_property}.

\begin{mythm}
    Assume to be given a nonlinear pH system \eqref{eq:nonlinear_ph_system} with a solution trajectory $\phStateTraj[t] \in \latentStateSpace$ and output $\phOutputTraj[t] \in \phOutputSpace$.
    Then, the reconstructed solution trajectory $\statesApproxTraj[t] = \fullDecoder(\phStateTraj[t]) \in \stateSpaceApprox \subset \stateSpace$ is a solution to the nonlinear pH system
    \begin{equation}\label{eq:non_linear_ph_system_state_space}
        \begin{aligned}
            \statesApproxDotTraj[t]
                                 & = \left(
            \evalField[{{\statesApproxTraj[t]}}]{\phJApprox} - \evalField[{{\statesApproxTraj[t]}}]{\phRApprox}
            \right)
            \evalField[{{\statesApproxTraj[t]}}]{\nabla\HamiltonianApprox}
            + \evalField[{{\statesApproxTraj[t]}}]{\phBApprox} \phInputUTraj[t],                                                                          \\
            \statesApproxTraj[0] & = \phStateApprox_0 \in \stateSpace,                                                                                    \\
            \phOutputApprox(t)   & = \rT{\evalField[{{\statesApproxTraj[t]}}]{\phBApprox}} \evalField[{{\statesApproxTraj[t]}}]{\nabla\HamiltonianApprox}
        \end{aligned}
    \end{equation}
    with matrices
    \begin{align}\label{eq:ph_matrices_state_space}
                                                                      & \evalField[\statesApproxPoint]{\phJApprox}
        \vcentcolon= \evalField[\phStatePoint]{\JacLatentState \fullDecoder}
        \evalField[\phStatePoint]{\phJ}
        \rT{\evalField[\phStatePoint]{\JacLatentState \fullDecoder}}, &
                                                                      & \evalField[\statesApproxPoint]{\phRApprox}
        \vcentcolon= \evalField[\phStatePoint]{\JacLatentState \fullDecoder}
        \evalField[\phStatePoint]{\phR}
        \rT{\evalField[\phStatePoint]{\JacLatentState \fullDecoder}}, &
                                                                      & \evalField[\statesApproxPoint]{\phBApprox}
        \vcentcolon= \evalField[\phStatePoint]{\JacLatentState \fullDecoder}
        \evalField[\phStatePoint]{\phB},
    \end{align}
    for $\statesApproxPoint = \fullDecoder(\phStatePoint)$ with $\phStatePoint \in \latentStateSpace$,
    the reconstructed Hamiltonian  $\HamiltonianApprox \vcentcolon= \Hamiltonian \circ \fullEncoder$, the reconstructed initial value $\statesApprox_0 \vcentcolon= \fullDecoder(\phState_0)$, and the output $\phOutputApprox(t) = \phOutputTraj[t]$.
\end{mythm}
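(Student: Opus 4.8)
The plan is to differentiate the reconstruction identity $\statesApproxTraj[t] = \fullDecoder(\phStateTraj[t])$ along the latent flow and then re-express every term through the decoder. By the chain rule, $\statesApproxDotTraj[t] = \evalField[{{\phStateTraj[t]}}]{\JacLatentState \fullDecoder}\,\phStateDotTraj[t]$; inserting the latent dynamics \eqref{eq:nonlinear_ph_system} gives
\[
    \statesApproxDotTraj[t]
    = \evalField[{{\phStateTraj[t]}}]{\JacLatentState \fullDecoder}
      \Bigl[
        \bigl( \evalField[{{\phStateTraj[t]}}]{\phJ} - \evalField[{{\phStateTraj[t]}}]{\phR} \bigr)\,
        \nabla\Hamiltonian(\phStateTraj[t])
        + \evalField[{{\phStateTraj[t]}}]{\phB}\,\phInputUTraj[t]
      \Bigr].
\]
Everything then hinges on rewriting $\nabla\Hamiltonian$ through $\nabla\HamiltonianApprox$.

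First I would establish the pointwise gradient identity $\nabla\Hamiltonian(\phStatePoint) = \rT{\bigl( \evalField[\phStatePoint]{\JacLatentState \fullDecoder} \bigr)}\,\nabla\HamiltonianApprox(\fullDecoder(\phStatePoint))$ for all $\phStatePoint \in \latentStateSpace$. This follows from $\HamiltonianApprox = \Hamiltonian \circ \fullEncoder$ and the chain rule, which give $\nabla\HamiltonianApprox(\statesApproxPoint) = \rT{\bigl( \evalField[\statesApproxPoint]{\JacState \fullEncoder} \bigr)}\,\nabla\Hamiltonian\bigl( \fullEncoder(\statesApproxPoint) \bigr)$; evaluating at $\statesApproxPoint = \fullDecoder(\phStatePoint)$, using the point-projection property \eqref{eq:point_projection_property} so that $\fullEncoder(\fullDecoder(\phStatePoint)) = \phStatePoint$, then left-multiplying by $\rT{\bigl( \evalField[\phStatePoint]{\JacLatentState \fullDecoder} \bigr)}$ and applying the transpose of the tangent-projection property \eqref{eq:tangent_projection_property}, namely $\rT{\bigl( \evalField[\phStatePoint]{\JacLatentState \fullDecoder} \bigr)}\,\rT{\bigl( \evalField[\fullDecoder(\phStatePoint)]{\JacState \fullEncoder} \bigr)} = \eye$, yields the claimed identity. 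Substituting it (at $\phStatePoint = \phStateTraj[t]$, hence $\fullDecoder(\phStatePoint) = \statesApproxTraj[t]$) into the displayed equation and grouping the Jacobian factors produces exactly the matrices $\phJApprox$, $\phRApprox$, $\phBApprox$ of \eqref{eq:ph_matrices_state_space}, i.e.\ the state equation of \eqref{eq:non_linear_ph_system_state_space}. The output equation follows from the same substitution, since $\rT{\evalField[{{\statesApproxTraj[t]}}]{\phBApprox}}\,\nabla\HamiltonianApprox(\statesApproxTraj[t]) = \rT{\evalField[{{\phStateTraj[t]}}]{\phB}}\,\rT{\bigl( \evalField[{{\phStateTraj[t]}}]{\JacLatentState \fullDecoder} \bigr)}\,\nabla\HamiltonianApprox(\statesApproxTraj[t]) = \rT{\evalField[{{\phStateTraj[t]}}]{\phB}}\,\nabla\Hamiltonian(\phStateTraj[t]) = \phOutputTraj[t]$, so $\phOutputApprox(t) = \phOutputTraj[t]$; and the initial condition is immediate from $\statesApproxTraj[0] = \fullDecoder(\phState_0) = \statesApprox_0$.

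It remains to argue that \eqref{eq:non_linear_ph_system_state_space} is a genuine pH system of the form \eqref{eq:nonlinear_ph_system}. The fields \eqref{eq:ph_matrices_state_space} are well defined on $\stateSpaceApprox = \fullDecoder(\latentStateSpace)$ because \eqref{eq:point_projection_property} makes $\fullDecoder$ injective with left inverse $\fullEncoder$, so every $\statesApproxPoint \in \stateSpaceApprox$ has the unique preimage $\phStatePoint = \fullEncoder(\statesApproxPoint)$; moreover the reconstructed trajectory stays in $\stateSpaceApprox$ for all $t$, so evaluating the fields along it is meaningful. Each of $\evalField[\statesApproxPoint]{\phJApprox}$ and $\evalField[\statesApproxPoint]{\phRApprox}$ is obtained from $\evalField[\phStatePoint]{\phJ}$, resp.\ $\evalField[\phStatePoint]{\phR}$, by the congruence $M \mapsto \evalField[\phStatePoint]{\JacLatentState \fullDecoder}\, M\, \rT{\evalField[\phStatePoint]{\JacLatentState \fullDecoder}}$, which preserves skew-symmetry, resp.\ symmetry together with positive semidefiniteness; hence \eqref{eq:nonlinear_ph_properties} carries over, and $\HamiltonianApprox \in \contFun^1$ as a composition of $\contFun^1$ maps.

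\textbf{Main obstacle.} I expect the only subtle point to be the bookkeeping in the gradient identity $\nabla\Hamiltonian(\phStatePoint) = \rT{\bigl( \evalField[\phStatePoint]{\JacLatentState \fullDecoder} \bigr)}\,\nabla\HamiltonianApprox(\fullDecoder(\phStatePoint))$: one must keep track of the transposes and the order of composition so that the point-projection property \eqref{eq:point_projection_property} and the tangent-projection property \eqref{eq:tangent_projection_property} combine correctly. Once that identity is in place, the rest is the chain rule together with the congruence-invariance of skew-symmetry and positive semidefiniteness.
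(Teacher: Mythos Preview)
Your proposal is correct and follows essentially the same route as the paper: differentiate $\statesApproxTraj[t]=\fullDecoder(\phStateTraj[t])$ via the chain rule, insert the latent pH dynamics, use the point-projection property \eqref{eq:point_projection_property} together with the transposed tangent-projection identity \eqref{eq:tangent_projection_property} to relate $\nabla\Hamiltonian$ and $\nabla\HamiltonianApprox$, and conclude by noting that the congruence $M\mapsto \evalField[\phStatePoint]{\JacLatentState\fullDecoder}\,M\,\rT{\evalField[\phStatePoint]{\JacLatentState\fullDecoder}}$ preserves skew-symmetry and positive semidefiniteness. Your additional remarks on well-definedness of the fields on $\stateSpaceApprox$ (via injectivity of $\fullDecoder$) and on $\HamiltonianApprox\in\contFun^1$ are not spelled out in the paper but are welcome clarifications.
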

\begin{proof}
    The initial value is matched by construction.
    The time derivative of the reconstructed solution trajectory reads
    \begin{align*}
        \statesApproxDotTraj[t]
        = \ddt \fullDecoder(\statesApproxTraj[t])
        = \evalField[{{\phStateTraj[t]}}]{\JacLatentState\fullDecoder} \phStateDotTraj[t].
    \end{align*}
    Using that $\phStateTraj[t]$ solves the pH system \eqref{eq:nonlinear_ph_system} and the transposed identity for the Jacobians \eqref{eq:tangent_projection_property} yields
    \begin{align*}
        \statesApproxDotTraj[t]
         & = \evalField[{{\phStateTraj[t]}}]{\JacLatentState\fullDecoder}
        \Big(
        \left(
        \evalField[{{\phStateTraj[t]}}]{\phJ} - \evalField[{{\phStateTraj[t]}}]{\phR}
        \right)
        \underbrace{
            \rT{\evalField[{{\phStateTraj[t]}}]{\JacLatentState\fullDecoder}}
            \rT{\evalField[{{\fullDecoder(\phStateTraj[t])}}]{\JacState\fullEncoder}}
        }_{=\eye \in \Rdim^{\latentStateDim \times \latentStateDim}}
        \evalField[{{\phStateTraj[t]}}]{\nabla\Hamiltonian}
        + \evalField[{{\phStateTraj[t]}}]{\phB} \phInputUTraj[t],
        \Big),
    \end{align*}
    which is equivalent to \eqref{eq:non_linear_ph_system_state_space} by using that with the chain rule and \eqref{eq:point_projection_property}, it holds
    \begin{align*}
        \evalField[{{\statesApproxTraj[t]}}]{\nabla \HamiltonianApprox}
        = \evalField[{{\fullDecoder(\phStateTraj[t])}}]{\nabla (\Hamiltonian \circ \fullEncoder)}
        = \rT{\evalField[{{\fullDecoder(\phStateTraj[t])}}]{\JacState\fullEncoder}}
        \evalField[{{(\fullEncoder \circ \fullDecoder)(\phStateTraj[t])}}]{\nabla\Hamiltonian}
        = \rT{\evalField[{{\fullDecoder(\phStateTraj[t])}}]{\JacState\fullEncoder}}
        \evalField[{{\phStateTraj[t]}}]{\nabla\Hamiltonian}
    \end{align*}
    together with the definition of the system matrices \eqref{eq:ph_matrices_state_space}.
    Similarly, it holds for the output
    \begin{align*}
        \phOutputApprox(t) & = \rT{\evalField[{{\statesApproxTraj[t]}}]{\phBApprox}} \evalField[{{\statesApproxTraj[t]}}]{\nabla\HamiltonianApprox}
        = \rT{\evalField[{{\phStateTraj[t]}}]{\phB}}
        \underbrace{
            \rT{\evalField[{{\phStateTraj[t]}}]{\JacLatentState\fullDecoder}}
            \rT{\evalField[{{\fullDecoder(\phStateTraj[t])}}]{\JacState\fullEncoder}}
        }_{=\eye \in \Rdim^{\latentStateDim \times \latentStateDim}}
        \evalField[{{\phStateTraj[t]}}]{\nabla\Hamiltonian}
        = \phOutputTraj[t].
    \end{align*}
    By construction, $\evalField[\phStateApprox]{\phJApprox}$ is a skew-symmetric matrix, and $\evalField[\phStateApprox]{\phRApprox}$ is positive semi-definite.
    Thus, \eqref{eq:non_linear_ph_system_state_space} is indeed a nonlinear pH system on $\stateSpaceApprox$.
\end{proof}
Note that in the case that the matrices $\evalField[\phStatePoint]{\phJ}, \evalField[\phStatePoint]{\phR}, \evalField[\phStatePoint]{\phB}$ on the latent space are state-independent,
the matrices on the state space \eqref{eq:ph_matrices_state_space} are in general state-dependent due to the dependency on the Jacobians.
Thus, a linear pH system on the latent space can model a nonlinear pH system on $\stateSpaceApprox$.

\paragraph*{Dissipation Inequality on State Space}
It can be shown that the dissipation inequality also holds on the state space $\stateSpaceApprox$ by using
(a) that $\phOutputApprox(t) = \phOutputTraj[t]$,
(b) that the dissipation inequality holds for the system on the latent space $\latentStateSpace$,
and (c) the identity \eqref{eq:point_projection_property}:
\begin{align*}
    \rT{\phOutputApprox(t)} \phInputUTraj[t]
    \stackrel{\text{(a)}}{=} \rT{\phOutputTraj[t]} \phInputUTraj[t]
    \stackrel{\text{(b)}}{\leq}  \ddt \left[ \Hamiltonian( \phStateTraj[t] ) \right]
    \stackrel{\text{(c)}}{=}  \ddt \left[ (\Hamiltonian \circ \fullEncoder \circ \fullDecoder)( \phStateTraj[t] ) \right]
    = \ddt \left[ \HamiltonianApprox( \statesApproxTraj[t] ) \right].
\end{align*}

\paragraph*{Passivity on State Space}
If the Hamiltonian $\Hamiltonian$ of the latent system is bounded from below, i.e., $\Hamiltonian(\phStatePoint) \geq \HamiltonianMin$ for all $\phStatePoint \in \latentStateSpace$, it defines a storage function for which the system is passive (see \Cref{sec:mathematical_properties}).
In this case, the reconstructed Hamiltonian $\HamiltonianApprox = \Hamiltonian \circ \fullEncoder$ is also bounded from below for $\statesApproxPoint = \fullDecoder(\phStatePoint) \in \stateSpaceApprox$ due to \eqref{eq:point_projection_property} with
$$\HamiltonianApprox(\statesApproxPoint)
    = (\Hamiltonian \circ \fullEncoder \circ \fullDecoder) (\phStatePoint)
    = \Hamiltonian (\phStatePoint)
    \geq \HamiltonianMin.$$
Together with the dissipation inequality on $\stateSpaceApprox$ from the previous paragraph,
we then know that the pH system on the state space is passive with the storage function
$\funStorageApprox(\statesApproxPoint) = \HamiltonianApprox(\statesApproxPoint) - \HamiltonianMin \geq 0$.

\paragraph*{Lyapunov Stability on State Space}
If the Hamiltonian $\Hamiltonian$ of the latent system
(i) has a strict local minimum at $\stateEquilibrium \in \latentStateSpace$,
i.e., there exists a neighborhood $\domainLyapunov \subset \stateSpaceApprox$ of $\stateEquilibrium$ with
$\Hamiltonian(\phStatePoint) > \Hamiltonian(\stateEquilibrium)$
for all $\phStatePoint \in \domainLyapunov \setminus \{ \stateEquilibrium \}$,
and (ii) the input is zero, i.e., $\phInputUTraj[t] = 0$ for all $\timee \in \timeSpace$,
then we know that $\stateEquilibrium$ is a Lyapunov stable point for the system on the latent space with Lyapunov function $\Hamiltonian$
(see \Cref{sec:mathematical_properties}).
In this case, $\stateEquilibriumApprox \vcentcolon= \fullDecoder(\stateEquilibrium) \in \stateSpaceApprox$ is a strict local minimum of $\HamiltonianApprox$ since
it holds for the set $\domainLyapunovApprox \vcentcolon= \fullDecoder(\domainLyapunov)$
that for each $\statesApproxPoint \in \domainLyapunovApprox \setminus \{ \stateEquilibriumApprox \}$ there exists a $\phStatePoint \in \domainLyapunov \setminus \{ \stateEquilibrium \}$ with $\statesApproxPoint = \fullDecoder(\phStatePoint)$ and thus with \eqref{eq:point_projection_property}, it holds for all $\statesApproxPoint \in \domainLyapunovApprox \setminus \{ \stateEquilibriumApprox \}$ that
$$
    \HamiltonianApprox(\statesApproxPoint)
    = (\Hamiltonian \circ \fullEncoder \circ \fullDecoder)(\phStatePoint)
    = \Hamiltonian (\phStatePoint)
    < \Hamiltonian (\stateEquilibrium)
    = (\Hamiltonian \circ \fullEncoder \circ \fullDecoder)(\stateEquilibrium)
    = \HamiltonianApprox (\stateEquilibriumApprox).
$$
This proves that $\stateEquilibriumApprox$ is a Lyapunov stable point with Lyapunov function $\HamiltonianApprox$ for the system on $\stateSpaceApprox$ since
(i) $\stateEquilibriumApprox$ is a strict local minimum of $\HamiltonianApprox$,
and (ii) the dissipation inequality holds for $\HamiltonianApprox$ on $\stateSpaceApprox$.

\paragraph*{Boundedness of Solutions on State Space}
Consider an initial value $\phState_0$
and a bounded neighborhood $\boundedNeighbourhood \subset \latentStateSpace$
such that the assumptions (i)--(iii) of \Cref{thm:boundedness_of_solutions} are fulfilled.
Then, the solution on the latent space is bounded, i.e., there exists $C_{\latentStateSpace} \geq 0$
such that $\norm{ \phStateTraj[t] - \phState_0 }_{\latentStateSpace} \leq C_{\latentStateSpace}$ (see \Cref{sec:mathematical_properties}).
If $\fullDecoder$ is continuously differentiable on the closure $\overline{\boundedNeighbourhood}$,
we know that it is Lipschitz continuous with Lipschitz constant
$\lipschitzConst
    = \max_{\phStatePoint \in \overline{\boundedNeighbourhood}}
    \norm{ \evalField[\phStatePoint]{\JacLatentState \fullDecoder} }$,
i.e.,
$
    \norm{ \fullDecoder( \phStateTraj[t] ) - \fullDecoder( \phState_0 ) }_{\stateSpace}
    \leq
    \lipschitzConst
    \norm{ \phStateTraj[t] - \phState_0 }_{\latentStateSpace}$.
Then, we know that the solutions are bounded on the state space with
\begin{align*}
    \norm{ \statesApproxTraj[t] - \phStateApprox_0 }_{\stateSpace}
    =\norm{ \fullDecoder( \phStateTraj[t] ) - \fullDecoder( \phState_0 ) }_{\stateSpace}
    \leq \lipschitzConst \norm{ \phStateTraj[t] - \phState_0 }_{\latentStateSpace}
    \leq \lipschitzConst\, C_{\latentStateSpace}.
\end{align*}

\paragraph*{Note on the Assumptions that pH Structure is fulfilled in the Physical Domain}
The assumptions \eqref{eq:point_projection_property} and \eqref{eq:tangent_projection_property} are in general not fulfilled exactly.
In \cite{Otto2023}, a method is discussed to enforce these properties by considering constrained autoencoders by restricting the architectures of the autoencoder.
In contrast to that, it is argued in \cite[Thm.~6.4]{Buchfink2024} that the property \eqref{eq:point_projection_property} is approximately fulfilled for the generic setup of an autoencoder without special assumptions on the architecture if the autoencoder is Lipschitz continuous and the reconstruction loss \eqref{eq: rec loss} is low enough.
In the subsequent numerical examples, we follow the second approach and do not enforce the projection properties \eqref{eq:point_projection_property} and \eqref{eq:tangent_projection_property}. Nevertheless, we provide a numerical study of how well the projection properties are fulfilled.
\newcommand{\setTest}{\latentStateSpace_{\mathrm{test}}}

\section{Numerical Experiments}
In the following, we showcase the capabilities of our proposed \phin{} and \aphin{} approaches on several numerical examples. We start with a typical mechanical pH benchmark system in the form of a mass-spring-damper chain to validate the performance of \phin{} alone. We then test \aphin{} on a nonlinear low-dimensional system and a nonlinear high-dimensional input-affine and parametric system.
\begin{tcolorbox}[
        colback = black!1!white,
        colframe = black!10!white,
        left=2pt,
        right=2pt,
        top=2pt,
        bottom=2pt,
    ]
    The code used to generate the subsequent results is accessible via
    \begin{center}
        \href{https://github.com/Institute-Eng-and-Comp-Mechanics-UStgt/ApHIN}{https://github.com/Institute-Eng-and-Comp-Mechanics-UStgt/ApHIN}
    \end{center}
    with a persistent release in~\cite{darus-4446_2024} under MIT Common License, while the data can be found in~\cite{darus-4418_2024}.
\end{tcolorbox}

\paragraph*{Error Measures}
In order to validate the performance of our proposed methods, we use a relative error on the state space as
\begin{align}
    \errorState(t,\params) = \frac{\norm{\statesTraj[t,\params] -  \fullDecoder(\phStateTraj[t,\params])}_{2}}{\frac{1}{\nSamples}\sum_{i=1}^{\nSamples}\norm{\statesTraj[\timee_i,\params]}_{2}} \label{eq:error_state}
\end{align}
as well as one on the latent states
\begin{align}
    \errorLatent(t,\params) = \frac{\norm{\fullEncoder(\states(t,\params)) - \phStateTraj[t,\params]}_{2}}{\frac{1}{\nSamples}\sum_{i=1}^{\nSamples}\norm{\fullEncoder(\states(t_i,\params))}_{2}}
    \label{eq:error_latent}
\end{align}
normalized by the mean of the states and encoded states respectively over all time steps $\{ t_i \}_{i=1}^{\nSamples} \subset \timeSpace$ of a single simulation.

In addition, we consider the mean over all time steps $\{ t_i \}_{i=1}^{\nSamples} \subset \timeSpace$ and samples $\{ \params_j \}_{j=1}^{\nSims} \subset \paramSpace$ for the parameter vectors in the test dataset for both error quantities with
\begin{align}
    \errorStateMean = \frac{1}{\nSims\nSamples} \sum_{j=1}^{\nSims}\sum_{i=1}^{\nSamples} \errorState(\timee_i,\params_j) \label{eq:error_state_mean}
\end{align}
and
\begin{align}
    \errorLatentMean = \frac{1}{\nSims\nSamples} \sum_{j=1}^{\nSims}\sum_{i=1}^{\nSamples} \errorLatent(\timee_i,\params_j) \label{eq:error_latent_mean}
\end{align}
to quantify the results with scalar values.
Please note that we use a slight change of notation here with~$\timee_i$ and~$\params_j$ being pairwise distinct samples and the total amount of samples corresponding to $\numData=\nSims\nSamples$, whereas before the parameter and time instants were independent of each other, not restricted to such a Cartesian product structure.

Moreover, we want to evaluate to what extent the assumptions \cref{eq:point_projection_property} and \cref{eq:tangent_projection_property} necessary to transfer the pH properties from the latent to the state space are met. Accordingly, we check on the projection property with
\begin{align}
    \errorProj=
    \frac{1}{\nSims \nSamples}
    \sum_{j=1}^{\nSims}
    \sum_{i=1}^{\nSamples}
    \frac
    {\norm{
            \latentStateTraj[\timee_i; \params_j]
            - (\fullEncoder \circ \fullDecoder)(\latentStateTraj[\timee_i; \params_j])
        }_{2}^2}
    {\norm{
            \latentStateTraj[\timee_i; \params_j]
        }_{2}^2} \label{eq:projection_error}
    ,
\end{align}
and
\begin{align}
    \errorJac=
    \frac{1}{\nSims\nSamples}
    \sum_{j=1}^{\nSims}
    \sum_{i=1}^{\nSamples}
    \norm{
        \eye
        - \evalField[{{\fullDecoder(\latentStateTraj[\timee_i; \params_j])}}]{\JacState \fullEncoder}
        \evalField[{{\latentStateTraj[\timee_i; \params_j]}}]{\JacLatentState\fullDecoder}}_{2}^2. \label{eq:jacobian_error}
\end{align}

\subsection{Numerical example I: Mass-spring-damper chain}
The first numerical example serves as a benchmark model for the capability of the \phin{} method alone, without including dimensionality reduction or nonlinear coordinate transformations. This allows us to directly compare the identified matrices with corresponding reference matrices.
A one-dimensional mass-spring-damper chain connected by links, as visualized in \cref{fig:mass-spring-damper}, is chosen for this purpose as it is a typical pH benchmark system\footnote{A collection of pH benchmark systems can be found at \url{https://algopaul.github.io/PortHamiltonianBenchmarkSystems.jl/}}. It can be formulated as a pH system \cref{eq:linear_ph_system} using the displacements~$\disps$ and momenta~$\momentums$ as states. For a system with three links and input only for the leftmost mass, this results in~$\phStatePoint=[\disp_1, \disp_2, \disp_3, \momentum_1, \momentum_2, \momentum_3]$ where the according system matrices read

\begin{align*}
     & \hat{\phJ}=
    \begin{bmatrix}
        0  & 0  & 0  & 1 & 0 & 0 \\
        0  & 0  & 0  & 0 & 1 & 0 \\
        0  & 0  & 0  & 0 & 0 & 1 \\
        -1 & 0  & 0  & 0 & 0 & 0 \\
        0  & -1 & 0  & 0 & 0 & 0 \\
        0  & 0  & -1 & 0 & 0 & 0 \\
    \end{bmatrix},
     & \hat{\phR}=
    \begin{bmatrix}
        0 & 0 & 0 & 0          & 0          & 0          \\
        0 & 0 & 0 & 0          & 0          & 0          \\
        0 & 0 & 0 & 0          & 0          & 0          \\
        0 & 0 & 0 & \damping_1 & 0          & 0          \\
        0 & 0 & 0 & 0          & \damping_2 & 0          \\
        0 & 0 & 0 & 0          & 0          & \damping_3 \\
    \end{bmatrix}, \\
     & \hat{\phQ}=
    \begin{bmatrix}
        \stiffness_1  & -\stiffness_1             & 0                         & 0                 & 0                 & 0                 \\
        -\stiffness_1 & \stiffness_1+\stiffness_2 & -\stiffness_2             & 0                 & 0                 & 0                 \\
        0             & -\stiffness_2             & \stiffness_2+\stiffness_3 & 0                 & 0                 & 0                 \\
        0             & 0                         & 0                         & \frac{1}{\mass_1} & 0                 & 0                 \\
        0             & 0                         & 0                         & 0                 & \frac{1}{\mass_2} & 0                 \\
        0             & 0                         & 0                         & 0                 & 0                 & \frac{1}{\mass_3} \\
    \end{bmatrix}, \;
     & \hat{\phB}=
    \begin{bmatrix}
        0 \\
        0 \\
        0 \\
        1 \\
        0 \\
        0 \\
    \end{bmatrix}.
\end{align*}

\paragraph*{Experiments}
In a first step, we generate parametric training data resulting from varying parameter vectors $\params = (\mass, \stiffness, \damping)$ composed of the mass~$\mass_1= \mass_2= \mass_3=\mass\in[0.1, 100]$\,kg, stiffness~$\stiffness_1= \stiffness_2= \stiffness_3=\stiffness\in[0.1, 100]\,\frac{\text{N}}{\text{m}}$, as well as damping parameters~$\damping_1, \damping_2, \damping_3=\damping\in[0.1, 10]\,\frac{\text{Ns}}{\text{m}}$. The system is excited with a damped harmonic input signal~$\inputU(\timee)=e^{(-\delta\cdot\timee)}\sin(\omega\timee^2)$, where the frequencies~$\omega\in[0.5, 5]$\,Hz and decay rates~$\delta\in[0.125, 2]$ are chosen randomly.
Based on this data, we want to identify the system matrices using the approach explained in the previous section.
A pH description, however, is not unique.
Hence, we transform the ground-truth system to one with~$\phQ=\eye$ via Cholesky factorization of~$\hat{\phQ}$, see~\cite{Beattie2019}.
Consequently, we only identify the matrices~$\phJid, \phRid$ and~$\phBid$ with \phin{}. This means that even though the classic \phin{} approach works successfully, we fix~$\phQid=\eye$ to achieve comparability with the reference system matrices.
We then train \phin{} and fit it with respect to the loss \cref{eq: ph loss} so that the resulting weight matrices form a pH system and explain the data. The implementation details of the network and details on the dataset can be found in \cref{tab:network hyperparameter}.

\paragraph*{Results}
As can be seen in \cref{fig:msd-matrices}, the identified matrices match their reference quite well for various parameter samples, and the sparsity patterns of the matrices are met accurately. It can be seen that $\phJid$ is skew-symmetric and that $\phRid$ also has the positive-semidefinite properties of the original counterpart due to the positive entries on the diagonal. The identified model is able to determine that the input is only applied to the first mass by identifying zeros for all remaining entries of $\phBid$. The relative mean error in the (latent) state \eqref{eq:error_latent_mean} is $0.08$ in the training and $0.07$ in the test case, see~\cref{tab:perf}.
The training and evaluation time of the identified model can be found in \cref{tab: times}.

\begin{figure}[htb]
    \begin{subfigure}[c]{\linewidth}
        \centering



    \begin{tikzpicture}[
        ]
        \tikzstyle{spring}=[thick,decorate,decoration={zigzag,pre length=0.3cm,post
        length=0.3cm,segment length=6}]
       
        \tikzstyle{damper}=[thick,decoration={markings,  
          mark connection node=dmp,
          mark=at position 0.5 with 
          {
            \node (dmp) [thick,inner sep=0pt,transform shape,rotate=-90,minimum
        width=15pt,minimum height=3pt,draw=none] {};
            \draw [thick] ($(dmp.north east)+(2pt,0)$) -- (dmp.south east) -- (dmp.south
        west) -- ($(dmp.north west)+(2pt,0)$);
            \draw [thick] ($(dmp.north)+(0,-5pt)$) -- ($(dmp.north)+(0,5pt)$);
          }
        }, decorate]
       
        \tikzstyle{ground}=[fill,pattern=north east lines,draw=none,minimum width=0.3cm,minimum height=1.5cm,inner sep=0pt,outer sep=0pt]
       
        \node[draw,outer sep=0pt,thick] (M1) [minimum width=1.25cm, minimum height=1.25cm] {$\mass_1$};
        \node[draw,outer sep=0pt,thick, right = 2.5cm of M1.east] (M2) [minimum width=1.25cm, minimum height=1.25cm] {$\mass_2$};
        \draw[spring] ($(M1.east) - (0,0.5)$) -- ($(M2.west) - (0,0.5)$) 
        node [midway,above = 0.1cm] {$\stiffness_1$};
        \node (groundd1) [ground,
                            right = 1.5cm of $(M1.east)$,
                         minimum height=0.6cm,
                         yshift = 0.5cm] {};                         
        \draw[thick] ($(groundd1.north west)$) -- 
                         ($(groundd1.south west)$) {};
        \draw[damper] ($(M1.east) + (0,0.5)$) -- ($(groundd1.west)$) node [midway,above=0.3cm] {$\damping_1$};
        \draw[thick, dashed] ($(M1.north)$) -- ($(M1.north) + (0,1)$);
        \draw[ultra thick, -latex] ($(M1.north) + (-1,0.6)$) -- 
                                   ($(M1.north) + (0,0.6)$)
                                   node [midway, below] {$\inputU_1$};
        \draw[thick, dashed] ($(M1.south)$) -- ($(M1.south) + (0,-1)$);
        \draw[ultra thick, -latex] ($(M1.south) + (0,-0.5)$) -- 
                                   ($(M1.south) + (1,-0.5)$)
                                   node [midway, below] {$\disp_1$};
        \draw[thick, dashed] ($(M2.south)$) -- ($(M2.south) + (0,-1)$);
        \draw[ultra thick, -latex] ($(M2.south) + (0,-0.5)$) -- 
                                    ($(M2.south) + (1,-0.5)$)
                                    node [midway, below] {$\disp_2$};    
        \node (ground2) [ground,
                            right = 1.5cm of $(M2.east)$,
                        minimum height=0.6cm,
                        yshift = 0.5cm] {};
        \draw[thick] ($(ground2.north west)$) -- 
                        ($(ground2.south west)$) {};
        \draw[damper] ($(M2.east) + (0,0.5)$) -- ($(ground2.west)$) node [midway,above=0.3cm] {$\damping_2$};
        \node[draw,outer sep=0pt,thick,right = 2.5cm of M2.east] (Mnm) [minimum width=1.25cm, minimum height=1.25cm] {$\mass_{3}$};
        \draw[spring] ($(M2.east) - (0,0.5)$) -- ($(Mnm.west) + (0,-0.5)$) 
        node [midway,above = 0.1cm] {$\stiffness_2$};
        \node (groundend) [ground,
                            right = 2cm of $(Mnm.east)$,
                        minimum height=1.5cm,
                        ] {};
        \draw[thick] ($(groundend.north west)$) -- 
                        ($(groundend.south west)$) {};
        \draw[damper] ($(Mnm.east) + (0,0.5)$) -- ($(groundend.west)+ (0,0.5)$) node [midway,above=0.3cm] {$\damping_{3}$};
        \draw[spring] ($(Mnm.east) - (0,0.5)$) -- ($(groundend.west) - (0,0.5)$) 
            node [midway,above = 0.1cm] {$\stiffness_{3}$};
        \draw[thick, dashed] ($(Mnm.south)$) -- ($(Mnm.south) + (0,-1)$);
        \draw[ultra thick, -latex] ($(Mnm.south) + (0,-0.5)$) -- 
                                   ($(Mnm.south) + (1,-0.5)$)
                                   node [midway, below] {$\disp_{3}$};    
    \end{tikzpicture}

        \caption{A one-dimensional mass-spring-damper chain with 3 masses.}
        \label{fig:mass-spring-damper}
    \end{subfigure}
    \begin{subfigure}[c]{\linewidth}
        \centering
        \input{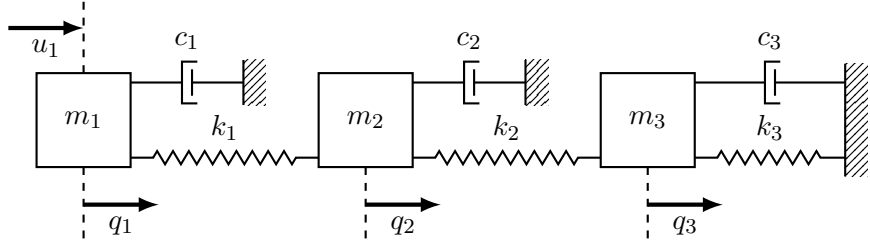}
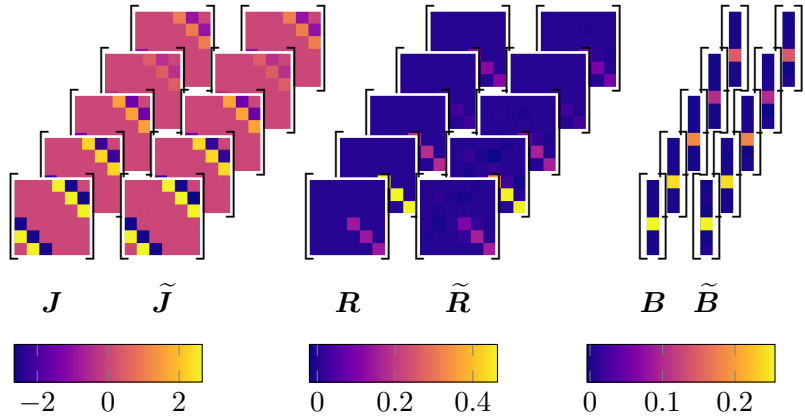
        \caption{Reference and identified port-Hamiltonian matrices for a three-link mass-spring-damper chain for five example parameter vectors~$\params$.}
        \label{fig:msd-matrices}
    \end{subfigure}
    \caption{A mass-spring-damper chain~\cref{fig:mass-spring-damper} that serves as a simple mechanical example for the port-Hamiltonian identification framework. The corresponding identified system matrices can be seen in~\cref{fig:msd-matrices}.}
\end{figure}

\subsection{Numerical example II: Pendulum}

With the second example, we want to showcase the benefits and necessity of incorporating the identification of a linear pH system into a nonlinear coordinate transformation. Therefore, we consider a mathematical pendulum as a comprehensive nonlinear mechanical example for which it is possible to derive the dynamics using the Hamiltonian formalism~\cite{Castanos2013}. A sketch of the pendulum is given in \cref{fig:pendulum}, where it can be seen that it consists of a single massless rod of length $l$ with a point mass attached on its far end. The Cartesian coordinates of the end point of the rod are given by~$(\pendulumPos_x,\; \pendulumPos_y)$. The small angle approximation is not assumed as the maximum deflection of the pendulum is chosen to be~$\vert \pendulumAngle \vert \leq \tfrac{\pi}{3}$.
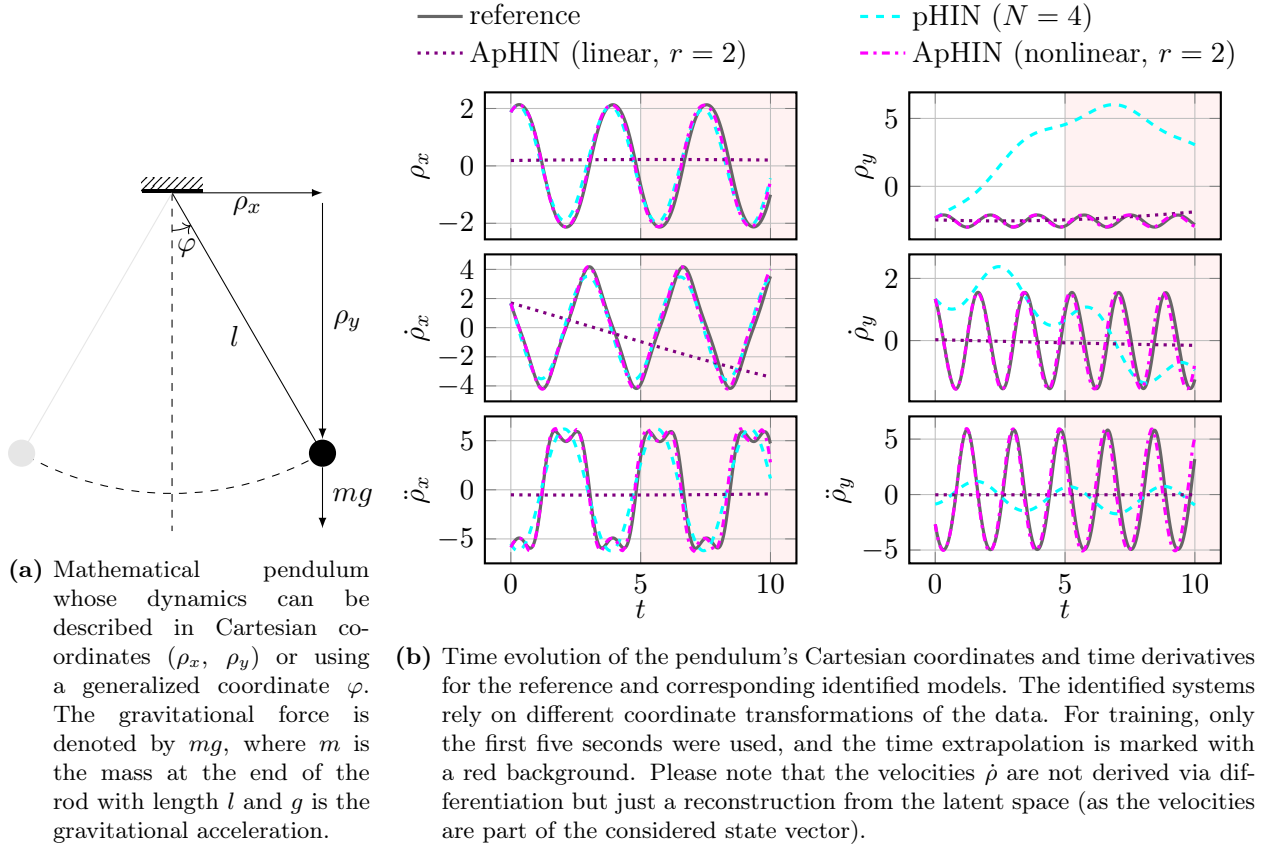
\begin{figure}[htb]
    \centering
    \begin{subfigure}[b]{0.29\textwidth}
        \begin{tikzpicture}
            \coordinate (centro) at (0,0);
            \fill (-.4,0) rectangle (.4,0.05);
            \fill[pattern=north east lines] (-.4,0) rectangle (.4,0.2);
            \draw[dashed] (-60:4) arc(-60:-120:4);
            \draw (0,0) -- node [xshift=-1ex, yshift=-1ex] (length) {$\pendulumLength$} (-60:4) node[fill,circle](mass){};
            \draw[-latex] (mass) -- node[right]{$\mass\gravitation$}++(0,-1) ;
            \draw[dashed,-] (centro) -- ++(0,-4.5) node (mid) {};
            \pic [draw, ->, "$\pendulumAngle$", angle eccentricity=1.5] {angle = mid--centro--mass};
            \draw[-latex] (centro) -- node [yshift=-1ex] (px) {$\pendulumPos_x$} ++(2,0) node (px) {};
            \draw[-latex] (px) -- node [xshift=2ex] (py) {$\pendulumPos_y$} (mass.north) node (py) {};
            \draw[black!10] (0,0) -- (-120:4) node[fill,circle]{};
        \end{tikzpicture}
        \caption{Mathematical pendulum whose dynamics can be described in Cartesian coordinates ($\pendulumPos_x$, $\pendulumPos_y$) or using a generalized coordinate $\pendulumAngle$. The gravitational force is denoted by $\mass\gravitation$, where $\mass$ is the mass at the end of the rod with length $\pendulumLength$ and $\gravitation$ is the gravitational acceleration. }
        \label{fig:pendulum}
    \end{subfigure}
    \hfill
    \begin{subfigure}[b]{0.69\textwidth}

\graphicspath{{fig/pendulum/}}


\begin{tikzpicture}

\newcommand{\linear}{linear.csv}
\newcommand{\pca}{pca.csv}
\newcommand{\nonlinear}{nonlinear.csv}
\newcommand{\reference}{reference.csv}
\newcommand{\trajectory}{4}

\pgfplotsset{
  myline/.style={
    very thick,
    dashdotted,
  },
  myaxis/.style={
    thick,
    xlabel style={yshift=1.ex},
    ylabel style={yshift=-1.ex},
    width=0.5\textwidth,
    height=0.5*0.618\textwidth,
    grid=major,
    axis background/.style=
    {%
        preaction={path picture={\fill [red!5] (axis cs:5,-10) rectangle (rel axis cs:1,10);}}
    },
  }
}
    \node (ax) at (0,0) {};

    \begin{axis}[
        myaxis,
        name=ax,
        ylabel={$\pendulumPos_x$},
        at={($ (ax.south) + (0,-10mm) $)},
        anchor=north,
        legend columns=2,
        legend cell align={left}, 
        xticklabels={},
        legend style={
            draw=none,
            at={(-0.25,1.1)},
            anchor=south west, 
            align=left,
            /tikz/column 2/.style={
                column sep=40pt,
            },
            }
    ]
        \addplot [myline, color=plotColor1, solid] table [col sep=comma, x=t, y=x_0_\trajectory] {fig/pendulum/\reference};
        \addplot [myline, color=plotColor4, dashed] table [col sep=comma, x=t, y=x_0_\trajectory] {fig/pendulum/\linear};
        \addplot [myline, color=plotColor3, dotted] table [col sep=comma, x=t, y=x_0_\trajectory] {fig/pendulum/\pca};
        \addplot [myline, color=plotColor2, dashdotted] table [col sep=comma, x=t, y=x_0_\trajectory] {fig/pendulum/\nonlinear};
        \addlegendentry{reference}
        \addlegendentry{\phin{} ($\stateDim=4$)}
        \addlegendentry{\aphin{} (linear, $\latentStateDim=2$)}
        \addlegendentry{\aphin{} (nonlinear, $\latentStateDim=2$)}
    \end{axis}
    \begin{axis}[
        myaxis,
        name=ax0,
        ylabel={$\pendulumPos_y$},
        at={($ (ax.east) + (15mm,0) $)},
        anchor=west,
        xticklabels={},
    ]
        \addplot [myline, color=plotColor1, solid] table [col sep=comma, x=t, y=x_1_\trajectory] {fig/pendulum/\reference};
        \addplot [myline, color=plotColor4, dashed] table [col sep=comma, x=t, y=x_1_\trajectory] {fig/pendulum/\linear};
        \addplot [myline, color=plotColor3, dotted] table [col sep=comma, x=t, y=x_1_\trajectory] {fig/pendulum/\pca};
        \addplot [myline, color=plotColor2, dashdotted] table [col sep=comma, x=t, y=x_1_\trajectory] {fig/pendulum/\nonlinear};
    \end{axis}

        \begin{axis}[
        myaxis,
        name=ax,
        ylabel={$\dot{\pendulumPos}_x$},
        at={($ (ax.south) + (0,-2mm) $)},
        anchor=north,
        xticklabels={},
    ]
        \addplot [myline, color=plotColor1, solid] table [col sep=comma, x=t, y=x_0_dt_\trajectory] {fig/pendulum/\reference};
        \addplot [myline, color=plotColor4, dashed] table [col sep=comma, x=t, y=x_0_dt_\trajectory] {fig/pendulum/\linear};
        \addplot [myline, color=plotColor3, dotted] table [col sep=comma, x=t, y=x_0_dt_\trajectory] {fig/pendulum/\pca};
        \addplot [myline, color=plotColor2, dashdotted] table [col sep=comma, x=t, y=x_0_dt_\trajectory] {fig/pendulum/\nonlinear};
    \end{axis}
    \begin{axis}[
        myaxis,
        name=ax0,
        ylabel={$\dot{\pendulumPos}_y$},
        at={($ (ax.east) + (15mm,0) $)},
        anchor=west,
        xticklabels={},
    ]
        \addplot [myline, color=plotColor1, solid] table [col sep=comma, x=t, y=x_1_dt_\trajectory] {fig/pendulum/\reference};
        \addplot [myline, color=plotColor4, dashed] table [col sep=comma, x=t, y=x_1_dt_\trajectory] {fig/pendulum/\linear};
        \addplot [myline, color=plotColor3, dotted] table [col sep=comma, x=t, y=x_1_dt_\trajectory] {fig/pendulum/\pca};
        \addplot [myline, color=plotColor2, dashdotted] table [col sep=comma, x=t, y=x_1_dt_\trajectory] {fig/pendulum/\nonlinear};
    \end{axis}

    \begin{axis}[
        myaxis,
        name=ax,
        ylabel={$\ddot{\pendulumPos}_x$},    xlabel={$t$},
        at={($ (ax.south) + (0,-2mm) $)},
        anchor=north,
    ]
        \addplot [myline, color=plotColor1, solid] table [col sep=comma, x=t, y=x_0_ddt_\trajectory] {fig/pendulum/\reference};
        \addplot [myline, color=plotColor4, dashed] table [col sep=comma, x=t, y=x_0_ddt_\trajectory] {fig/pendulum/\linear};
        \addplot [myline, color=plotColor3, dotted] table [col sep=comma, x=t, y=x_0_ddt_\trajectory] {fig/pendulum/\pca};
        \addplot [myline, color=plotColor2, dashdotted] table [col sep=comma, x=t, y=x_0_ddt_\trajectory] {fig/pendulum/\nonlinear};
    \end{axis}
    \begin{axis}[
        myaxis,
        name=ax0,
        ylabel={$\ddot{\pendulumPos}_y$},
        at={($ (ax.east) + (15mm,0) $)},
        anchor=west,    xlabel={$t$},
    ]
        \addplot [myline, color=plotColor1, solid] table [col sep=comma, x=t, y=x_1_ddt_\trajectory] {fig/pendulum/\reference};
        \addplot [myline, color=plotColor4, dashed] table [col sep=comma, x=t, y=x_1_ddt_\trajectory] {fig/pendulum/\linear};
        \addplot [myline, color=plotColor3, dotted] table [col sep=comma, x=t, y=x_1_ddt_\trajectory] {fig/pendulum/\pca};
        \addplot [myline, color=plotColor2, dashdotted] table [col sep=comma, x=t, y=x_1_ddt_\trajectory] {fig/pendulum/\nonlinear};
    \end{axis}
\end{tikzpicture}

        \caption{
            Time evolution of the pendulum's Cartesian coordinates and time derivatives for the reference and corresponding identified models. The identified systems rely on different coordinate transformations of the data. For training, only the first five seconds were used, and the time extrapolation is marked with a red background.
            Please note that the velocities $\dot{\pendulumPos}$ are not derived via differentiation but just a reconstruction from the latent space (as the velocities are part of the considered state vector).
        }
        \label{fig: pendulum results}
    \end{subfigure}
    \caption{Sketch of the mathematical pendulum~(\subref{fig:pendulum}) along with the results of the pH identification~(\subref{fig: pendulum results}).}
\end{figure}

Describing it as a linear pH system leads to significant errors due to the present nonlinearities. By using \aphin{}, on the contrary, it is possible to circumvent this by an identification of generalized coordinates in which the dynamics of the pendulum can be described as a LTI pH system.

\paragraph*{Dataset}
For the application of our approach, we generate training data using a formulation of the dynamics in Cartesian coordinates, resulting in a DAE system with the state vector
\begin{equation}
    \states =
    \left[\begin{array}{cccc}
            \pendulumPos_x & \pendulumPos_y & \dot{\pendulumPos}_x & \dot{\pendulumPos}_y
        \end{array}
        \right]^\transpose \;,
\end{equation}
consisting of the endpoint positions and their time derivatives together with the geometric constraint
\begin{equation}
    \pendulumPos_x^2 + \pendulumPos_y^2 - l^2 = 0 \;,
\end{equation}
which restricts the solution on the manifold of possible states. As for many other mechanical systems, it is more convenient to formulate the dynamics in generalized coordinates. A natural choice for this model is the angle of the pendulum $\pendulumAngle$ and the angular velocity $\pendulumAngVel = \dot{\pendulumAngle}$ resulting in
\begin{equation}
    \latentState = \left[\begin{array}{cc}
            \pendulumAngle & \pendulumAngVel
        \end{array}\right]^\transpose \;
\end{equation} with the corresponding nonlinear ODE system
\begin{equation} \label{eq:pendulum-ode}
    \dot{\pendulumAngle} = \pendulumAngVel \;, \quad
    \dot{\pendulumAngVel} = -\frac{g}{l} \sin \pendulumAngle \;.
\end{equation}

It is then possible to express the Cartesian coordinates only depending on the two generalized coordinates as
\begin{equation}
    \states = \left[\begin{array}{cccc}
            l \sin \pendulumAngle & l \cos \pendulumAngle & l \omega \cos \pendulumAngle & l \omega \sin \pendulumAngle \,
        \end{array}\right]^\transpose \;.
\end{equation}

For the identification of the system, simulations based on varying initial conditions that cover a range of 5\,s serve as training data, while the test trajectories capture 10\,s to investigate time extrapolation. More details on the dataset and the implemented model can be found in \cref{tab:network hyperparameter} again.

\paragraph*{Experiments}
We perform three experiments to showcase the highlights of the proposed framework. Firstly, we directly apply \phin{} (without an autoencoder making a reduction) so that the coordinates can only be linearly transformed through~$\phQ$ and the system is of state dimension~$\stateDim=4$. Secondly, we apply \aphin{} but with the restriction to only use PCA as a \emph{linear} coordinate transformation and reduction to a latent dimension of~$\latentStateDim=2$ and conduct the identification on the reduced coordinates (\aphin{}, linear). Lastly, we apply the identification embedded into a nonlinear autoencoder (\aphin{}, nonlinear).
We then solve each of the identified systems for unseen initial conditions using the implicit midpoint rule as a structure-preserving integration scheme. The resulting solutions for the test trajectory can be seen in \cref{fig: pendulum results}.

\paragraph*{Results}
Obviously, linear coordinate transformations do not result in satisfying approximations as the nonlinearity apparent in the pendulum cannot be captured. Hence, \phin{} alone leads to an identified system in which not both variables are well captured, but only~$\pendulumPos_x$.
Incorporating the system identification into a linear reduction algorithm (\aphin{}, linear), the dynamics are not captured at all, and it becomes apparent that the latent coordinates are not suitable for the identification of linear dynamics.

Therefore, we deploy a nonlinear autoencoder and aim to find a set of coordinates of the same order $\latentStateDim=2$ as the generalized coordinates in which the dynamics appear approximately linear. The results presented in \cref{fig: pendulum results} confirm that this goal is reached as the time evolution for all states and time derivatives follows the reference closely, and even extrapolation is possible due to the harmonic system dynamics. These results are also reflected in the performance measures presented in \cref{tab:perf}. The projection properties \eqref{eq:projection_error} and \eqref{eq:jacobian_error} are satisfied up to machine precision for the linear approach using PCA, but the dynamics are not captured very well for training and test data. In contrast, the nonlinear dimension reduction leads to low mean error values in the latent and state coordinates. The projection properties are fulfilled up to the order $10^{-4}$.
For \aphin{} (nonlinear), the time required for encoding the initial condition, integrating the latent system, and decoding the latent trajectory into the full states accumulates to approximately~$0.08$\,s while training one epoch requires around~$0.18$\,s as stated in \cref{tab: times}.
In general, these results show that our framework is capable of reducing the system dimension to the intrinsic size and identifying linear stable and passive dynamics on the latent coordinates while still being able to reconstruct the physical quantities.

\begin{table}
    \setlength{\tabcolsep}{3pt}
    \centering
    \caption{Performance measures. }
    \label{tab:perf}
    \npdecimalsign{.}
    \nprounddigits{2}
    \begin{tabular}[c]{l l l l l}
        \toprule
         & $\errorProj$                       & $\errorJac$ & $\errorLatentMean$ & $\errorStateMean$ \\
        \midrule
        \textbf{mass-spring-damper}                                                                  \\
        \quad train
         & $-$
         & $-$
         & $\numprint{0.07656346145512051}$
         & $-$                                                                                       \\
        \quad test
         & $-$
         & $-$
         & $\numprint{0.07233414763218916}$
         & $-$                                                                                       \\
        \midrule
        \textbf{pendulum}                                                                            \\
        \ {\phin{} ($\stateDim=4$)}                                                                  \\
        \quad train
         & $-$
         & $-$
         & $\numprint{0.6821843335724542}$
         & $\numprint{1.3181871433951287}$                                                           \\
        \quad test
         & $-$
         & $-$
         & $\numprint{0.6961165654360438}$
         & $\numprint{1.1349923744630812}$                                                           \\
        \ {\aphin{} (linear, $\latentStateDim=2$)}                                                   \\
        \quad train
         & $\numprint{7.0699507e-15}$
         & $\numprint{1.59254e-14}$
         & $\numprint{0.5744512946275222}$
         & $\numprint{0.838349635176911}$                                                            \\
        \quad test
         & $\numprint{6.6907577e-15}$
         & $\numprint{1.59254e-14}$
         & $\numprint{0.7646590521125483}$
         & $\numprint{0.9361470790771349}$                                                           \\
        \ {\aphin{} (nonlinear, $\latentStateDim=2$)}                                                \\
        \quad train
         & $\numprint{7.667193e-4}$
         & $\numprint{1.2583961e-3}$
         & $\numprint{0.06041471647527066}$
         & $\numprint{0.047554838125492174}$                                                         \\
        \quad test
         & $\numprint{2.0384161e-4}$
         & $\numprint{2.4303233e-3}$
         & ${\numprint{0.1624507322454064}}$
         & ${\numprint{0.15388846139013798}}$                                                        \\
        \midrule
        \textbf{disc brake}                                                                          \\
        \ {\aphin{} (nonlinear, $\latentStateDim=3$)}                                                \\
        \quad train
         & $\numprint{1.628e-6}$
         & ${\numprint{5.207e-2}}$
         & ${\numprint{2.91e-4}}$
         & ${\numprint{8.47e-3}}$                                                                    \\
        \quad test
         & $\numprint{1.51e-6}$
         & ${\numprint{3.92e-2}}$
         & ${\numprint{5.44e-4}}$
         & $\numprint{9.04e-3}$                                                                      \\
        \bottomrule
    \end{tabular}
    \npnoround
\end{table}

\subsection{Numerical example III: Disc Brake -- A Coupled Thermo-Mechanical System}

In our final application example, we want to explore the limits of our \aphin{} framework and test its suitability for a (i) high-dimensional, (ii) parameter-dependent, and (iii) input-affine system. Consequently, this time, the autoencoder does not only have to find a suitable nonlinear coordinate transformation, but at the same time (i) significantly reduce the state dimension. In addition, the pH identification network must not only capture autonomous dynamics but also (ii) take into account dependencies on material properties that enter the system matrices and (iii) identify a suitable input matrix.
In particular, we pursue the identification of a low-dimensional pH description for a disc brake, which represents a practical application crucial for the safe operation of vehicles. In this regard, a simulation model of the disc brake can serve as a tool to investigate phenomena leading to undesirable brake squeal caused by deformations due to the frictional heat generated during braking.

The full-order model is implemented as a finite element model in the commercial simulation software \textit{Abaqus} that is based on fully-coupled linear thermoelasticity equations. It is composed of one layer of 60 elements, resulting in a total of $\nNodes = 146$ nodes.
The resulting discretization of the disc brake model is shown in \cref{fig:disc-brake-mesh}, where it can be seen that it is clamped at four nodes which leads to a total number of DOFs of $\stateDim=998$.
Each unconstrained node has $7$ degrees of freedom (DOFs) consisting of a one-dimensional temperature DOF, and 3 displacement and velocity DOFs, respectively. The state vector for the disc brake covering all nodes is consequently defined as
\begin{equation}
    \states =
    \left[\begin{array}{cccc}
            \rT\temps & \rT\disps & \rT{\dot{\disps}}
        \end{array}
        \right]^\transpose \; \label{eq:disc_brake_state}
\end{equation}
with the temperature~$\temps\in\Rdim^{146}$, displacements~$\disps\in\Rdim^{426}$, and velocities~$\dot{\disps}\in\Rdim^{426}$.

The model is a perfect candidate for evaluating the potential of our proposed framework since conventional solution methods using the finite element method require a comparably severe amount of computing time.
Often, there is no access to the operators due to closed-source software, and therefore, there is no possibility to use intrusive reduction methods. The framework presented here offers a solution to still obtain a reduced model based on data.
\begin{figure}[htb]
    \centering
\begingroup%
  \makeatletter%
  \providecommand\color[2][]{%
    \errmessage{(Inkscape) Color is used for the text in Inkscape, but the package 'color.sty' is not loaded}%
    \renewcommand\color[2][]{}%
  }%
  \providecommand\transparent[1]{%
    \errmessage{(Inkscape) Transparency is used (non-zero) for the text in Inkscape, but the package 'transparent.sty' is not loaded}%
    \renewcommand\transparent[1]{}%
  }%
  \providecommand\rotatebox[2]{#2}%
  \newcommand*\fsize{\dimexpr\f@size pt\relax}%
  \newcommand*\lineheight[1]{\fontsize{\fsize}{#1\fsize}\selectfont}%
  \ifx\svgwidth\undefined%
    \setlength{\unitlength}{250.3433075bp}%
    \ifx\svgscale\undefined%
      \relax%
    \else%
      \setlength{\unitlength}{\unitlength * \real{\svgscale}}%
    \fi%
  \else%
    \setlength{\unitlength}{\svgwidth}%
  \fi%
  \global\let\svgwidth\undefined%
  \global\let\svgscale\undefined%
  \makeatother%
  \begin{picture}(1,0.37665271)%
    \lineheight{1}%
    \setlength\tabcolsep{0pt}%
    \put(0,0){\includegraphics[width=\unitlength,page=1]{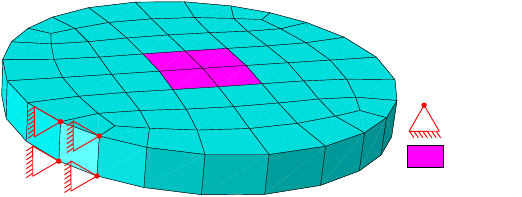}}%
    \put(0.87394111,0.13090745){\makebox(0,0)[lt]{\lineheight{1.25}\smash{\begin{tabular}[t]{l}clamped\end{tabular}}}}%
    \put(0.87141306,0.06597504){\makebox(0,0)[lt]{\lineheight{1.25}\smash{\begin{tabular}[t]{l}heat area\end{tabular}}}}%
  \end{picture}%
\endgroup%

    \caption{Discretization of the disc brake model in \textit{Abaqus} where the heat application area is colored in pink. The model is clamped at the four nodes on the left, marked in red.
    }
    \label{fig:disc-brake-mesh}
\end{figure}

\paragraph*{Dataset}
To evaluate the performance of our proposed method on the disc brake, we consider a scenario in which heat is transferred to the disc through a heat input area consisting of nine nodes (pink area in \cref{fig:disc-brake-mesh}). The heat flux $\heatflux\in [0.01, \; 9.99]\cdot\numprint{e6}\,\text{W/m$^2$}$ is applied as a constant input $\phInputU(\timee)=\heatflux$ on theses nodes. The considered parameter vector $\params=(\heatcond,\density)\in\paramSpace\subset\Rdim^2$ consists of two material properties, the disc's heat conductivity~$\heatcond\in[6.5, \;86.5]\,\text{W/mK}$ and the density~$\density\in[6850,\; 8850]\,\text{kg/m$^3$}$.

Each full-order simulation covers the time interval $\timeSpace = [0, 3]\,$s and the corresponding simulation results are exported with a sampling time of~$1\,$ms resulting in~$\nSamples=3001$ time steps per simulation. The initial conditions are set to $\states_0 = \bm{0}$. For each simulation, quasi-random parameter vectors are sampled using Halton sequences. Details on the dataset are listed in \cref{tab:network hyperparameter}.

The exported simulation results only consist of the displacements~$\disps$ and the temperature~$\temps$. To account for the usual second-order structure of mechanical systems, we further extend the state vector with the velocities $\dot{\disps}$ by using a second-order central differences scheme on the displacements $\disps$.
The complete dataset is given by $\{ \temps(\timee_j; \params_j) \}_{j=1}^{\numData}$, $\{ \disps(\timee_j; \params_j) \}_{j=1}^{\numData}$, $\{ \dot{\disps}(\timee_j; \params_j) \}_{j=1}^{\numData}$, $\{ \phInputUTraj[\timee_j]\}_{j=1}^{\numData}$  and $\{ \params_j \}_{j=1}^{\numData}$ where the data points are calculated from the number of time steps and total simulations as $\numData=\nSamples\nSims$.

Due to the problem's multi-physical nature, each state component is scaled by a constant factor to arrive at the same order of magnitude. Additionally, the training inputs and parameters are normalized component-wise to the interval $[0,1]$ using the maximum and minimum values that occur across all training simulations. The scaling values obtained from the training data are also applied to the test data.
The hyperparameter settings of the used \aphin{} model can be found in \cref{tab:network hyperparameter}.

\paragraph*{Results}
The reduced model can be evaluated in real-time using efficient implementations since the encoder and decoder consist only of rapidly evaluable artificial neural networks, and the pH system in the latent space is a low-dimensional LTI system, which can be solved efficiently using the implicit midpoint rule. The results in \cref{fig:disc_brake_rms_error} show the relative error in the state space where, based on \cref{eq:error_state}, only the correspondent values of each component \cref{eq:disc_brake_state} are picked from the state vector $\states$ and its identified and decoded counterpart $\approximate{\states}$. It can be seen that the framework is able to approximate the heating and expansion behavior of the disc brake very well. The mean test error stays below 4\,\% for the velocity $\dot{\disps}$, below 1\,\% for the temperature $\temps$  and is even less than 0.3\,\% for the displacement $\disps$ component. Moreover, \cref{tab:perf} shows that both the projection error and the error in the Jacobian are very low and thus, the assumptions \eqref{eq:point_projection_property} and \eqref{eq:tangent_projection_property} are weakly fulfilled. Computational-wise, an evaluation of the reduced model requires about~$0.1$\,s as can be seen from \cref{tab: times}.
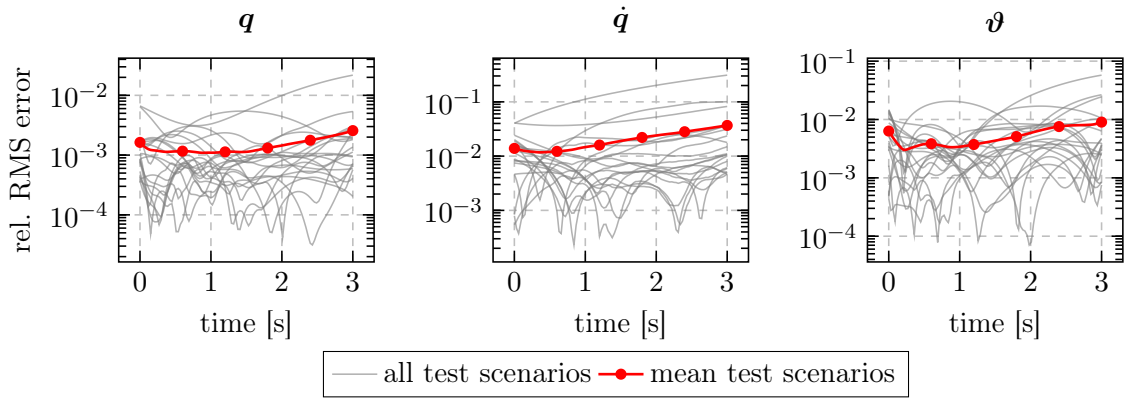
\begin{figure}[htb]
    \centering
    \usetikzlibrary{positioning}

\def\markRepeatValue{60}
\def\numTestSim{19}

\pgfplotsset{
  every axis plot/.append style={line width=1pt},
  every axis/.append style={
    ymajorgrids,
    xmajorgrids,
    grid style={dashed, lightgray,semithick},
   	axis line style = semithick,
   	every tick/.style={semithick,},
  },
}


\pgfplotsset{
    error_scenarios/.style = {
        color = gray, 
        mark = none,
        semithick,
        solid,
        opacity=0.6,
                        },
    mean_error/.style = {
        color = red, 
        mark = *,
        mark repeat = 20,
        mark options={fill=red, scale=0.8},
        solid,
                            },
}

\definecolor{mycolor1}{rgb}{0,0,0}                      
\definecolor{mycolor2}{rgb}{0.85000,0.32500,0.09800}    
\definecolor{mycolor3}{rgb}{0.92900,0.69400,0.12500}    
\definecolor{mycolor4}{rgb}{0.49400,0.18400,0.55600}    
\definecolor{mycolor5}{rgb}{0.46600,0.67400,0.18800}    
\definecolor{mycolor6}{rgb}{0.30100,0.74500,0.93300}    
\definecolor{mycolor7}{rgb}{0.63500,0.07800,0.18400}    

\begin{tikzpicture}
    \begin{axis}
        [
        xlabel = time {[s]},
        ylabel = rel. RMS error,
        title = $\disps$,
        width = 0.3\textwidth,
        at = {(0,0)},
        legend to name = test,
        legend columns = 2,
        ymode =log,
        ]
        \addplot+[
            error_scenarios,
            ] 
            table[
                x=t,
                y = error_state_error_disp_0,
                col sep = comma,
                ]{fig/disc_brake/error/rms_error_state_disp.csv};
        \addlegendentry{all test scenarios}
        \foreach \columnnum in {1,...,\numTestSim}
        {
        \addplot+[
            error_scenarios,
            forget plot, 
            ] 
            table[
                x=t,
                y = error_state_error_disp_\columnnum,
                col sep = comma,
                ]{fig/disc_brake/error/rms_error_state_disp.csv};
        }
        \addplot+[ 
        mean_error,
        ]
        table[
            x = t,           
            y = mean_error_state_error_disp,     
            col sep = comma,    
        ]{fig/disc_brake/error/rms_error_state_disp.csv}; 
        \addlegendentry{mean test scenarios}
    \end{axis}
    \begin{axis}
        [
        xlabel = time {[s]},
        title = $\dot{\disps}$,
        width = 0.3\textwidth,
        at = {(0.3\textwidth,0)},
        name = vel_axis,
        ymode= log,
        ]
        \foreach \columnnum in {0,...,\numTestSim}
        {
        \addplot+[
            error_scenarios,
            ] 
            table[
                x=t,
                y = error_state_error_vel_\columnnum,
                col sep = comma,
                ]{fig/disc_brake/error/rms_error_state_vel.csv};
        }
        \addplot+[ 
        mean_error,
        ]
        table[
            x = t,           
            y = mean_error_state_error_vel,     
            col sep = comma,    
        ]{fig/disc_brake/error/rms_error_state_vel.csv}; 
        \legend{} 
    \end{axis}
    \begin{axis}
        [
        xlabel = time {[s]},
        title = $\temps$,
        width = 0.3\textwidth,
        at = {(0.6\textwidth,0)},
        ymode=log,
        ]
        \foreach \columnnum in {0,...,\numTestSim}
        {
        \addplot+[
            error_scenarios,
            ] 
            table[
                x=t,
                y = error_state_error_temp_\columnnum,
                col sep = comma,
                ]{fig/disc_brake/error/rms_error_state_temp.csv};
        }
        \addlegendentry{error scenarios}
        \addplot+[ 
        mean_error,
        ]
        table[
            x = t,           
            y = mean_error_state_error_temp,     
            col sep = comma,    
        ]{fig/disc_brake/error/rms_error_state_temp.csv}; 
        \addlegendentry{mean error}
        \legend{} 
    \end{axis}
    \node[
        below = 1.05cm of vel_axis.south,
        align=left,
        anchor=north,
        ] {%
    \ref*{test}
    };
\end{tikzpicture}%
    \caption{Relative state error \eqref{eq:error_state} divided into the different components for test data.}
    \label{fig:disc_brake_rms_error}%
\end{figure}

In addition to the error metrics globally defined for the disc brake, we also want to report local errors occurring at the individual nodes of the model. For this, we introduce error measures for each domain on node-level as
\begin{align}
    e_{{\temp}}(\temp_{j,k},\approximate{\temp}_{j,k}) = |{\temp_{j,k} -  \approximate{\temp}_{j,k}}|, \
    e_{{\disps}}(\disps_{j,k},\approximate{\disps}_{j,k}) = \norm{\disps_{j,k} -  \approximate{\disps}_{j,k}}_{2}, \text{ and }
    e_{\dot{\disps}}(\dot{\disps}_{j,k},\approximate{\dot{\disps}}_{j,k}) = \norm{\dot{\disps}_{j,k} -  \approximate{\dot{\disps}}_{j,k}}_{2}
    \label{eq:error_temp}
\end{align}
where $\temp_{j,k}:=\temp_k(\timee_j; \params_j),\, \disps_{j,k}:=\disps_k(\timee_j; \params_j),\, \dot{\disps}_{j,k}:=\dot{\disps}_k(\timee_j; \params_j)$ define the value at node $k=1,\dots,\nNodes$ and time sample $j=1,\dots,\nSamples\nSims$ for the temperature, displacements and velocities, respectively. Their approximations are derived analogously. These measures give us absolute error values for the difference between the reference temperature and its approximation, as well as the Euclidean distance between the reference and approximated simulation for the displacements and velocities. For the evaluation, we provide a visual comparison between some sample test simulations obtained from the FE software and the approximations using our \aphin{} method in \cref{fig:disc_brake_3d}. As can be seen there, the largest errors for the temperature and displacements occur around the heating area, while the velocity error is the largest in areas on the opposite side of the clamping.
Nevertheless, the behavior of the disc brake is captured well for a variety of different parameters and inputs.

\begin{figure}[htb]
    \centering
    \input{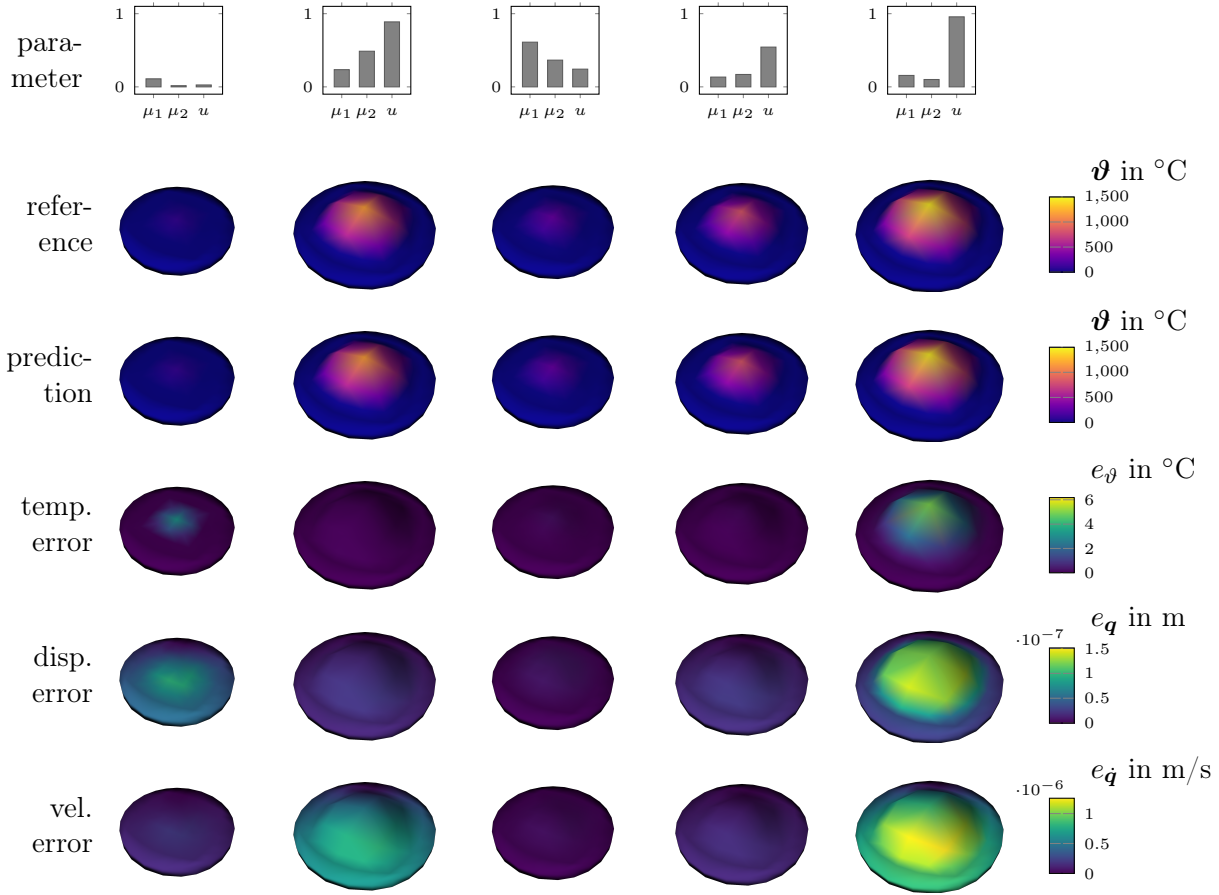}%
    \caption{Visualization of the displacements and temperature of the disc brake for five example test simulations comparing the reference results (1st row) with their approximations (2nd row). The corresponding normalized simulation parameters and inputs are indicated on top of the disc brake visualizations. In addition to the simulation results itself, the approximation quality for the temperature, displacement, and velocity field are reported as colored discs with respect to the corresponding error in rows three to five. For visualization purposes, the displacements are scaled by 400 and the last time point of each simulation is shown.}
    \label{fig:disc_brake_3d}%
\end{figure}

\section{Conclusion}

In this paper, we propose a novel framework for structure-enforcing system identification that operates in the port-Hamiltonian (pH) formulation. Low-dimensional linear pH systems are identified solely from data of parameter-dependent, input-affine, and (potentially) high-dimensional systems.
The corresponding (parameter-dependent) pH system and input matrices are identified by a neural network, which we refer to as pH identification network~(\phin).
The architecture of the framework naturally inherits the important system theoretical properties of passivity and stability of the identified system in the latent pH space. Simultaneously, the description as a pH system makes the identified model tailor-made for multi-physics problems.

For data created by high-dimensional or very nonlinear systems, we encapsulate the \phin{} in a nonlinear coordinate transformation by means of an autoencoder resulting in \aphin{}. The autoencoder transforms the original system states into (low-dimensional) coordinates suitable for describing their dynamics as a linear pH system. Furthermore, under (mild) assumptions, the properties of the pH system are preserved not only in the latent but also in the state space.

We were able to demonstrate the capabilities of the approach by various numerical examples. It is possible to accurately reconstruct the system matrices for a system whose description as pH formulation is known. Furthermore, the nonlinear dynamics of a pendulum are captured with an identified linear pH system through nonlinear coordinate transformations.
In addition, an application-oriented thermomechanical model in the form of a high-dimensional disc brake served as an example of a parameter-dependent and input-affine system for which it was nevertheless possible to efficiently model its dynamic behavior.

The proposed framework's nature limits it to data-generating dynamical systems that are stable and passive. Although this is intentional, it is expected to result in a poor fit for other types of systems. Furthermore, while the identified system in the latent space may be suitable for extrapolation to unseen scenarios, the surrounding autoencoder is prone to errors outside of the training regime.
Another limitation of the method is that it relies on complete measurements/high-fidelity data and therefore the accuracy is highly dependent on the quality of the data.

Hence, an exciting direction for future work is to extend the data-driven system identification of pH systems to more realistic situations where only partial observations are available. In addition, constrained autoencoders, which by definition fulfill projection properties, can be introduced into the framework to satisfy the assumptions necessary to transfer the pH properties to physical space.
Moreover, a similar approach can be used to learn nonlinear pH systems in the latent space. This case is already covered in our analysis from \Cref{sec:systheo_properties_preserved}, which discusses transferring system-theoretical properties from $\phSpace$ to $\stateSpaceApprox$. Furthermore, the same \aphin{} framework can also be used to identify a lifted higher-dimensional linear system in the latent space, which could be particularly advantageous for some nonlinear examples. In addition, the ability to identify input-affine systems could make the framework especially suitable for control scenarios where a complex system needs to be efficiently controlled.

\appendix
\appendixpage

\section{Proof Boundedness of Solutions}
\label{App Proof Boundedness of Solutions}
\begin{proof}
    The proof is a generalization of \cite[Thm~3.9]{Peng2016} from Hamiltonian to pH systems and works via a contradiction.
    Assume there exists $t \in \timeSpace$ such that $\phStateTraj[t] \notin \boundedNeighbourhood$.
    Since the initial value $\phState_0$ is by Assumption~(i) contained in $\boundedNeighbourhood$ and the solution is continuous in $t$, there exists a time $\timeBoundary \in \timeSpace$ such that $\phStateTraj[\timeBoundary] \in \partial \boundedNeighbourhood$.
    But then, it holds by Assumption~(ii) that
    \begin{align*}
        \underbrace{
            \Hamiltonian(\phStateTraj[\timeBoundary])
        }_{
            > \Hamiltonian(\phStateTraj[t_0])
        }
        - \Hamiltonian(\phStateTraj[t_0])
        > 0,
    \end{align*}
    which is with Assumption (iii) a contradiction to the integral over the dissipation inequality
    \begin{align*}
        \Hamiltonian(\phStateTraj[\timeBoundary]) - \Hamiltonian(\phStateTraj[t_0])
        \leq \int_{t_0}^{\timeBoundary} \rT{\phOutputTraj[t]} \underbrace{\phInputUTraj[t]}_{=0} \rmd t = 0.
    \end{align*}
    Thus, $\phStateTraj[t] \in \boundedNeighbourhood$ for all $t \in \timeSpace$.
    Moreover, the solutions are guaranteed to stay in a ball, i.e., $\norm{\phStateTraj[t] - \phState_0}_{\latentStateSpace} \leq \constBoundedSolutions$ for all $\timee \in \timeSpace$, with the radius $\constBoundedSolutions = \sup_{\phState_1 \in \boundedNeighbourhood} \norm{\phState_1 - \phState_0}_{\latentStateSpace}$.
\end{proof}

\setcounter{table}{0}
\renewcommand{\thetable}{A\arabic{table}}

\section{Network Implementation and Runtimes}
\label{App1}

\begin{table}[htb]
    \setlength{\tabcolsep}{3pt}
    \centering
    \caption{Network hyperparameter}
    \label{tab:network hyperparameter}
    \npdecimalsign{.}
    \nprounddigits{2}
    \begin{tabular}[c]{l l l l}
        \toprule
                                                           & \textbf{mass-spring-damper}    & \textbf{pendulum}          & \textbf{disc brake}        \\
        \midrule
        \textbf{latent dim.} $\latentStateDim$             & 6                              & 2                          & 3                          \\
        \textbf{intermediate dim.} $\intermediateStateDim$ & -                              & -                          & 8                          \\
        \textbf{autoencoder layer sizes}                   & -                              & [32, 32 ,32]               & [64, 32, 16]               \\
        \textbf{$\neuralnetWeights$ layer sizes}           & [16, 16, 16, 16]               & -                          & [32, 32, 32]               \\
        \textbf{activation function}                       & elu                            & elu                        & elu                        \\
        \textbf{loss factors}                                                                                                                         \\
        $\quad \lossfactor_{\text{rec}}$                   & -                              & 1                          & 1                          \\
        $\quad \lossfactor_{\text{pH}}$                    & 1                              & 0.1                        & 0.1                        \\
        $\quad \lossfactor_{\text{con}}$                   & -                              & 0.001                      & 0.001                      \\
        $\quad \lossfactor_{\text{L1}}$                    & \npnoround\numprint{1e-07}     & \npnoround\numprint{1e-10} & \npnoround\numprint{1e-09} \\
        \textbf{early stopping}                            & \checkmark                     & \checkmark                 & \checkmark                 \\
        \textbf{epochs}                                    & 2000                           & 4000                       & 2000                       \\
        \textbf{batch size}                                & 64                             & 256                        & 256                        \\
        \textbf{training data shape} $[\nSims \times \nSamples \times \stateDim]$
                                                           & $[90 \times 400 \times 6]$
                                                           & $[12 \times 500 \times 4]$
                                                           & $[48 \times 3001 \times 1022]$                                                           \\
        \textbf{test data shape} $[\nSims \times \nSamples \times \stateDim]$
                                                           & $[30 \times 400 \times 6]$
                                                           & $[6 \times 1000 \times 4]$
                                                           & $[20 \times 3001 \times 1022]$                                                           \\
        \bottomrule
    \end{tabular}
    \npnoround
\end{table}

\begin{table}[htb]
    \setlength{\tabcolsep}{3pt}
    \centering
    \caption{Training and evaluation times on a Apple M1 Max with a 10-Core CPU, 24-Core GPU, and 64 GB of RAM.}
    \label{tab: times}
    \npdecimalsign{.}
    \nprounddigits{3}
    \begin{tabular}[c]{l l l l}
        \toprule
                                              & \textbf{mass-spring-damper} & \textbf{pendulum}             & \textbf{disc brake}    \\
        \midrule
        \textbf{training time per epoch in} s & \numprint{0.3153211}        & \numprint{0.182887461}        & \numprint{2.84521138}  \\
        \textbf{evaluation time per run in} s & \numprint{0.002764201}      & \numprint{0.0775530338287353} & \numprint{0.106170893} \\
        \bottomrule
    \end{tabular}
    \npnoround
\end{table}

\acknowledgements{
    Funded by Deutsche Forschungsgemeinschaft (DFG, German Research Foundation) under Germany's Excellence Strategy - EXC 2075 - 390740016. We acknowledge the support by the Stuttgart Center for Simulation Science (SimTech). This research was partially funded by the Ministry of Science, Research, and the Arts (MWK) Baden-Württemberg, Germany, within the Artificial Intelligence Software Academy (AISA) and the InnovationsCampus Future Mobility.
}

\printbibliography





\end{document}